\theoremstyle{plain}
\newtheorem{theorem}{Theorem}[section]
\newtheorem{conjecture}[theorem]{Conjecture}
\newtheorem{corollary}[theorem]{Corollary}
\newtheorem{lemma}[theorem]{Lemma}
\newtheorem{proposition}[theorem]{Proposition}
\newtheorem*{claim*}{Claim}
\newtheorem*{problem*}{Problem}
\newtheorem*{conjecture*}{Conjecture}
\newtheorem*{theorem*}{Theorem}
\theoremstyle{definition}
\newtheorem{definition}[theorem]{Definition}
\newtheorem*{definition*}{Definition}
\newtheorem{example}[theorem]{Example}
\newcommand\RR{\mathbb{R}}
\newcommand\ad{\mathrm{ad}}
\newcommand\Ann{\mathrm{Ann}}
\newcommand\cA{\mathcal{A}}
\newcommand\cF{\mathcal{F}}
\newcommand\cH{\mathcal{H}}
\newcommand\cJ{\mathcal{J}}
\newcommand\cM{\mathcal{M}}
\newcommand\al{\alpha} 
\newcommand\bt{\beta} 
\newcommand\Dl{\Delta}
\newcommand\lm{\lambda} 
\newcommand\Lm{\Lambda}
\newcommand\Mu{\mathrm{M}} 
\newcommand\la{\langle}
\newcommand\ra{\rangle}
\newcommand{\A}{\mathrm{A}}
\newcommand{\B}{\mathrm{B}}
\newcommand{\C}{\mathrm{C}}
\newcommand\Aut{\mathrm{Aut}}
\renewcommand{\phi}{\varphi}
\title{On the structure of axial algebras} 
\author{S.M.S.~Khasraw\footnote{Department of Mathematics, College of Education, Salahaddin University-Erbil, Erbil, Kurdistan Region, Iraq, email: sanhan.khasraw@su.edu.krd}
\and
 J.~M\textsuperscript{c}Inroy\footnote{School of Mathematics, University of Bristol, Bristol, BS8 1TW, UK, and the Heilbronn Institute for Mathematical Research, Bristol, UK, email: justin.mcinroy@bristol.ac.uk}
 \and
 S.~Shpectorov\footnote{School of Mathematics, University of Birmingham, Edgbaston, Birmingham, B15 2TT, UK, email: S.Shpectorov@bham.ac.uk}}
\date{\today}
\begin{document}
\maketitle
\begin{abstract}
Axial algebras are a recently introduced class of non-associative algebra motivated by applications to groups and vertex-operator algebras. We develop the structure theory of axial algebras focussing on two major topics: (1) radical and simplicity; and (2) sum decompositions. 
\end{abstract}

\section{Introduction}

Axial algebras are a new class of non-associative algebra introduced by Hall, Rehren and Shpectorov \cite{hrs}. They axiomatise some key properties of vertex operator algebras (VOAs). VOAs were first introduced by physicists but particularly became of interest to mathematicians with Frenkel, Lepowsky and Meurman's \cite{FLM} construction of the moonshine VOA $V^\natural$ whose automorphism group is the Monster $M$, the largest sporadic finite simple group. 
The rigorous theory of VOAs was developed by Borcherds \cite{B86} and it was instrumental in his proof of the monstrous moonshine conjecture.

An \emph{axial algebra} is a commutative non-associative algebra $A$ generated by a set of axes $X$. These axes are idempotents whose adjoint action decomposes the algebra as a direct sum of eigenspaces and the multiplication of eigenvectors satisfies a certain fusion law. Jordan and Matsuo algebras are examples of axial algebras with one of the simplest (and strongest) fusion laws. A slight relaxation of this fusion law adds the Griess-Norton algebra for the Monster $M$ and other interesting examples.

Such axial algebras are of interest because the fusion law is $\mathbb{Z}_2$-graded. Hence, for an axis $a$, this induces a $\mathbb{Z}_2$-grading on the algebra and there is a natural involution $\tau_a$ associated to $a$. The group generated by the set of all such $\tau_a$, for $a \in X$, is called the \emph{Miyamoto group} and it is a subgroup of the automorphism group of $A$.

In this paper, we introduce an equivalence relation on sets of axes in an axial algebra. A set $X$ of axes is closed if it is closed under the action of the Miyamoto group $G$ defined by $X$; that is, $\bar X=X$, where $\bar{X} := X^G$. Two sets $X$ and $Y$ are equivalent if their closures $\bar{X}$ and $\bar{Y}$ are equal.  We say that a property of an axial algebra is \emph{stable} if it is invariant under equivalence of axes.  In this paper, we introduce several new properties of axial algebras and we show that they, and some existing well-known properties, are stable.  Firstly, we show that generation of axial algebras is stable.  That is, equivalent sets of axes generate the same algebra.  The Miyamoto group of an axial algebra is also stable.

We introduce the radical of an axial algebra $A$ with axes $X$ and show that it too is stable.

\begin{definition*}
The \emph{radical} $R(A, X)$ of $A$ with respect to the generating set of primitive axes $X$ is the unique largest ideal of $A$ containing no axes from $X$.
\end{definition*}

This gives us a way to split ideals of $A$, so that we may consider separately those which are contained in the radical and those which contain an axis.  We introduce the \emph{projection graph} on the set of axes $X$ and show how this determines which axes are contained in a proper ideal.

A \emph{Frobenius form} on an axial algebra is a non-zero (symmetric) bilinear form $(\cdot, \cdot)$ which associates with the algebra product. That is, $(a,bc) = (ab,c)$ for all $a,b,c \in A$. Currently, all known axial algebras admit such a form. Comparing our notion of the radical with that of the form, we have the following.

\begin{theorem*}
Let $A$ be a primitive axial algebra with a Frobenius form. Then the radical $A^\perp$ of the Frobenius form coincides with the radical $R(A, X)$ of $A$ if and only if $(a,a)\neq 0$ for all $a\in X$.
\end{theorem*}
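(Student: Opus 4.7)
The plan rests on three basic observations that follow directly from the Frobenius identity $(uv, w) = (u, vw)$ and primitivity. First, $A^\perp$ is an ideal: for $y \in A^\perp$ and $z, w \in A$ we have $(yz, w) = (y, zw) = 0$, so $yz \in A^\perp$. Second, if $v$ is a $\lambda$-eigenvector of $\ad_a$ with $\lambda \neq 1$, then $\lambda(a, v) = (a, av) = (a^2, v) = (a, v)$, forcing $(a, v) = 0$. Third, by primitivity every $b \in A$ decomposes as $b = \lambda_a(b)\,a + b''$ with $b''$ in the sum of the remaining eigenspaces of $\ad_a$; combining with the second observation yields the clean formula $(a, b) = \lambda_a(b)\,(a, a)$.

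The forward direction falls out immediately: if some $a \in X$ has $(a, a) = 0$, the above formula forces $(a, b) = 0$ for every $b \in A$, so $a \in A^\perp = R(A, X)$, contradicting that $R(A, X)$ contains no axes. For the converse, assume $(a, a) \neq 0$ for every $a \in X$. Then $A^\perp$ meets $X$ trivially, and being an ideal it is contained in $R(A, X)$ by the maximality property defining the radical. For the reverse inclusion, suppose for contradiction that $R(A, X) \not\subseteq A^\perp$ and pick $b \in R(A, X)$ with $(b, c) \neq 0$ for some $c \in A$. Since $A$ is generated by $X$, we may replace $c$ by a monomial in axes; then, using $(b, a_1 u) = (ba_1, u)$ and iterating, we push axes one at a time from the right entry into the left, producing $b' \in R(A, X)$ (still in the ideal at every step) and an axis $a \in X$ with $(b', a) \neq 0$.

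To extract a scalar multiple of $a$ from $b'$, I would apply a Lagrange interpolation polynomial for $\ad_a$. Letting $f(x) = \prod_{\mu \neq 1}(x - \mu)/(1 - \mu)$, where $\mu$ ranges over the eigenvalues of $\ad_a$ other than $1$, primitivity gives $f(\ad_a)\,b' = \lambda_a(b')\,a = \bigl((a, b')/(a, a)\bigr)\,a$, a nonzero multiple of $a$. Because $\ad_a$ preserves the ideal $R(A, X)$, this element lies in $R(A, X)$, so $a \in R(A, X)$, the desired contradiction. The crux of the proof, and the step where I expect any real difficulty, is this Lagrange interpolation manoeuvre: it is precisely the device that converts a nonzero Frobenius pairing into an actual membership of an axis in the ideal, and the hypothesis $(a, a) \neq 0$ enters here exactly to ensure that the extracted coefficient does not vanish.
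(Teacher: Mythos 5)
Your argument is correct and is essentially the paper's proof run as a contrapositive: your ``pushing axes from right to left'' is exactly the paper's induction $(w_1w_2,R)=(w_1,w_2R)$ on the length of a spanning monomial, and your Lagrange-interpolation step is just an explicit version of the paper's observation that an ideal not containing the primitive axis $a$ lies in $A_{\cF\setminus\{1\}}(a)$ and is therefore orthogonal to $a$. No gaps; the only small caveat is that a non-associative monomial need not have a single axis in its leftmost slot, so at each step you peel off a sub-product rather than an axis, which the Frobenius identity handles just as well.
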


In particular, when an axial algebra has a Frobenius form, we can use the above theorem as an easy way to find the radical $R(A, X)$.  We also give an application of the above theorem to show that all the Norton-Sakuma algebras apart from $2\B$ are simple.

In the second half of the paper, we discuss sum decompositions of axial algebras. It is clear from our definition of the radical that the annihilator $\Ann(A) \subseteq R(A, X)$. We show that if $A$ has a sum decomposition of (not necessarily axial) subalgebras $A = \sum_{i \in I} A_i$, where $A_i A_j = 0$ for $i \neq j$, and $\Ann(A) = 0$, then $A = \bigoplus_{i \in I} A_i$.  

\begin{theorem*}
Suppose that $A = \sum_{i \in I} A_i$ is a sum decomposition and let $X_i \subseteq X$ be the set of axes which are contained in $A_i$.  Then, $A = \sum_{i \in I} B_i$, where $B_i = \la \la X_i \ra\ra$ is an axial algebra.  Moreover, this decomposition into the sum of axial subalgebras is invariant under arbitrary changes of axes \textup{(}not just equivalence\textup{)}.
\end{theorem*}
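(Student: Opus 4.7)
My plan is to break the proof into three steps: localize each axis inside the sum decomposition, deduce the axial refinement, and establish invariance by a short computation. For the localization step, fix an axis $a\in X$ and write $a=\sum_j a_j$ with $a_j\in A_j$ (such an expression need not be unique). Because $A_iA_j=0$ for $i\ne j$, each summand $A_j$ is invariant under $\ad_a$: for $b\in A_j$, $ab=(\sum_i a_i)b=a_jb\in A_j$. Since $\ad_a$ is semisimple with eigenvalues in the fusion law, each $A_j$ splits as $\bigoplus_\lambda A_{j,\lambda}$, and correspondingly $a_j=\sum_\lambda a_j^{(\lambda)}$. Collecting $a=\sum_{j,\lambda}a_j^{(\lambda)}$ by eigenvalue and using that $a$ itself is a $1$-eigenvector of $\ad_a$ forces $\sum_j a_j^{(1)}=a$. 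Primitivity of $a$ then gives $a_j^{(1)}=c_j a$ with $\sum_j c_j=1$, so any $j$ with $c_j\ne 0$ satisfies $c_j a\in A_j$ and hence $a\in A_j$.

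Setting $X_i:=X\cap A_i$ and $B_i:=\la\la X_i\ra\ra$, the localization gives $X=\bigcup_i X_i$. The subspace $\sum_i B_i$ is closed under multiplication because $B_iB_j\subseteq A_iA_j=0$ for $i\ne j$, so it is a subalgebra of $A$ containing $X$, hence equal to $A=\la\la X\ra\ra$. This yields the refinement $A=\sum_i B_i$ with axial summands.

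For invariance, let $Y$ be any other generating set of primitive axes for $A$, not assumed equivalent to $X$, and set $Y_i:=Y\cap A_i$ and $C_i:=\la\la Y_i\ra\ra$. Pick $y\in Y_i$, so $y\in A_i$. Since $A=\sum_j B_j$ is itself a sum decomposition (as $B_jB_k=0$ for $j\ne k$), decompose $y=\sum_j\beta_j$ with $\beta_j\in B_j\subseteq A_j$. For $j\ne i$, $y\beta_j\in A_iA_j=0$, so $y\beta_j=0$; then $y=y^2=y(\sum_j\beta_j)=y\beta_i$, and also $y\beta_i=(\sum_k\beta_k)\beta_i=\beta_i^2$, using $\beta_k\beta_i\in B_kB_i=0$ for $k\ne i$. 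Therefore $y=\beta_i^2\in B_iB_i\subseteq B_i$, so $Y_i\subseteq B_i$ and hence $C_i\subseteq B_i$; by symmetry $B_i=C_i$, and the axial refinement depends only on the given sum decomposition.

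I expect the localization step to be the main technical obstacle, since it combines primitivity with the semisimplicity of $\ad_a$ in an essential way and is the only place where the non-uniqueness of the expression $a=\sum_j a_j$ has to be controlled; once it is in hand, both the refinement and the invariance follow from compact calculations that use only $A_iA_j=0$ for $i\ne j$ together with the inclusion $B_i\subseteq A_i$.
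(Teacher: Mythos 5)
Your proof is correct, and while its overall architecture (localize each axis in a summand, refine to $\sum_i B_i$, compare the refinements coming from $X$ and $Y$) matches the paper's, the two key steps are executed by genuinely different means. The paper localizes axes via idempotents: it first shows that any idempotent $a=\sum_i a_i$ can be rewritten so that each component $a_i$ is itself an idempotent (using the fact that two decompositions of the same element differ by annihilator elements), then observes $aa_i=a_i$, so every $a_i$ lies in $A_1(a)$, and primitivity plus $A_i\cap A_j\subseteq\Ann(A_i)$ forces exactly one component to be non-zero. You instead exploit semisimplicity of $\ad_a$: each $A_j$ is $\ad_a$-invariant, hence splits into eigenspaces, and the $1$-components of the $a_j$ must sum to $a$, so primitivity hands you a non-zero multiple of $a$ inside some $A_j$. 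Your route avoids the annihilator machinery and the idempotent-component lemma entirely, at the cost of being specific to axes; the paper's route is stronger in that it yields a statement about \emph{all} primitive idempotents (each lies in exactly one summand), which it then reuses verbatim for the invariance step by applying it to $y\in Y$ relative to the decomposition $A=\sum_j B_j$. Your invariance step replaces that reuse with the direct computation $y=y^2=y\beta_i=\beta_i^2\in B_i$, which is a nice self-contained shortcut. One small presentational point: it is worth noting explicitly (as the paper does) that the $X_i$ are automatically disjoint, since an axis in $A_i\cap A_j$ would lie in $\Ann(A_i)$ and hence be zero; you never need this, but it completes the picture that the $X_i$ form a partition of $X$.
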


This suggests the following definition.

\begin{definition*}
The \emph{non-annihilating graph} $\Delta(X)$ of an axial algebra $A$ with generating axes $X$ is the graph with vertex set $X$ and, for $a \neq b$, an edge $a \sim b$ if and only if $ab \neq 0$.
\end{definition*}

It is clear that if $A= \sum_{i \in I} A_i$ is a sum decomposition where the $A_i$ are axial algebras, then the corresponding $X_i$ are unions of connected components of $\Delta(X)$. It is natural to ask: is it not true that the finest sum decomposition of an axial algebra arises when each $X_i$ is a single connected component of $\Delta$? In particular, we make the following conjecture.

\begin{conjecture*}
The finest sum decomposition of an axial algebra $A$ of Monster type arises when each $X_i$ is just a single connected component of $\Dl$.
\end{conjecture*}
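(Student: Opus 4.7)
The upper bound in the conjecture is immediate. If $A = \sum_i A_i$ is any sum decomposition, then by the earlier theorem we may take $A_i = \la\la X_i \ra\ra$ with $X_i = X \cap A_i$ partitioning $X$. Since $A_i A_j = 0$ forces $X_i X_j = \{0\}$, distinct $X_i$ are non-adjacent in $\Dl$, so each $X_i$ is a union of connected components. It therefore suffices to show that the partition of $X$ into its connected components $X_1, \ldots, X_n$ itself yields a sum decomposition. Writing $B_k = \la\la X_k \ra\ra$, this reduces to proving $B_i B_j = 0$ for all $i \neq j$, since then $\sum_k B_k$ is a subalgebra of $A$ containing every axis and hence equals $A$.

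The first step exploits the Monster-type fusion rule $0 \star 0 \subseteq \{0\}$, which makes the $0$-eigenspace $A_0(a)$ a subalgebra of $A$ for every axis $a$. For $a \in X_i$ and $c \in X_j$ with $i \neq j$ we have $ac = 0$, so $c \in A_0(a)$; therefore $X_j \subseteq A_0(a)$ and hence $B_j \subseteq A_0(a)$, giving $a \cdot B_j = 0$. Thus every axis of $X_i$ annihilates all of $B_j$. A dual observation shows that for any single axis $c \in X_j$ the set $\{z \in A : zc = 0\}$ equals $A_0(c)$, is a subalgebra, and contains $X_i$; consequently it contains $B_i$, so $B_i \cdot c = 0$ for every $c \in X_j$. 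At this point the ``axis times subalgebra'' version of the statement holds on both sides.

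The main obstacle is upgrading to the full statement $B_i B_j = 0$. The natural attempt is to fix $y \in B_j$ and show that $\{z \in A : zy = 0\}$ is a subalgebra containing $X_i$: this is clean when $y$ is itself an idempotent (one recovers $A_0(y)$), but for a general $y \in B_j$ no eigenspace decomposition is available, and dually $\Ann(b)$ need not be a subalgebra for a non-axis $b \in B_i$. The task is therefore to exhibit a subalgebra $U$ of $A$ with $B_i \subseteq U \subseteq \{z : zB_j = 0\}$, a genuinely non-trivial closure statement in a non-associative setting. Two routes seem worth trying: (i) induct on the length of $y$ as a word in the axes of $X_j$, using the full Monster-type fusion law and the $\mathbb{Z}_2$-grading to decompose $y_1 y_2$ into eigenspace components with respect to each $a \in X_i$ and to reduce $b \cdot (y_1 y_2)$ to strictly shorter products; (ii) when $A$ carries a Frobenius form, use the identification of $R(A,X)$ with the form radical proved earlier in the paper, together with the associating property $(ab,c) = (a,bc)$, to convert form-orthogonality into algebra annihilation. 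Removing the axis hypothesis on $y$ will almost certainly require more than the single rule $0 \star 0 \subseteq \{0\}$, which is presumably why the statement remains conjectural.
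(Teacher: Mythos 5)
This statement is a conjecture, and the paper itself does not prove it; it explicitly says it cannot establish the missing claim $A_iA_j=0$ in general and offers only a partial result. Your write-up is therefore correctly calibrated: you do not claim a proof, and your reduction is exactly the paper's. The easy half (any sum decomposition forces each $X_i$ to be a union of connected components of $\Dl$) and the identification of the remaining obstacle ($B_iB_j=0$ for the components themselves) match the paper's discussion verbatim. Your partial progress is also correct: since $0\star 0\subseteq\{0\}$ makes $A_0(a)$ a subalgebra, each axis $a\in X_i$ annihilates $B_j$ and, dually, $B_i$ annihilates each axis $c\in X_j$.

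Where the paper gets further than you do is precisely your route (i). Using the Seress property of $\cM(\al,\bt)$ (not just $0\star 0\subseteq\{0\}$ but $0\star\lm\subseteq\{\lm\}$ for all $\lm$), the Seress Lemma gives $a(xy)=(ax)y$ for $y\in A_0(a)+A_1(a)$, and this lets one show that $T=\Ann(A_j)\cap A_i$ is a \emph{quasi-ideal} containing $X_i$. Hence the body $Q(A_i,X_i)$ --- the span of left-normed products $x_1(x_2(\cdots(x_{k-1}x_k)\cdots)$ --- annihilates all of $A_j$, not merely its axes. This yields $A=\sum_i A_i$ whenever all but one $A_i$ are full-bodied (e.g.\ $3$-closed), which is strictly stronger than your ``axis times subalgebra'' statement. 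The reason the induction on word length cannot be pushed past left-normed products is exactly the failure mode you flag: for a general $y\in B_j$ there is no eigenspace decomposition, and the Seress Lemma only controls multiplication by an axis on one side. The paper also notes a concrete warning for this route: there is a $4$-closed primitive algebra of Monster type with Miyamoto group $S_3\times S_3$ in which $\dim Q(A,X)=17<18=\dim A$, so products of type $(x_1x_2)(x_3x_4)$ genuinely escape the body and the full-bodied hypothesis is not automatic. Your route (ii) via the Frobenius form is not pursued in the paper and would be an interesting independent angle, though it presupposes a form that is non-degenerate enough to convert orthogonality into annihilation.
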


We show that the Miyamoto groups do indeed respect this decomposition of the axes.

\begin{theorem*}
Let $A$ be a $T$-graded axial algebra with $0 \in \mathcal{F}_{1_T}$ and the components of $\Delta(X)$ be $X_i$ for $i \in I$.  Then, $G(X)$ is a central product of the $G(X_i)$.
\end{theorem*}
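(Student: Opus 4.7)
The plan is to verify the two defining properties of a central product of the subgroups $G(X_i)$: (i) that $G(X)$ is generated by the $G(X_i)$, and (ii) that $[G(X_i), G(X_j)] = 1$ for $i \neq j$. Property (i) is immediate from the definition of the Miyamoto group: since $X$ is the disjoint union of the $X_i$, the $T$-graded Miyamoto generators $\tau_a^\chi$ (for $a \in X$ and $\chi$ a character of $T$) of $G(X)$ partition into the generating sets of the various $G(X_i)$, so $G(X) = \langle G(X_i) : i \in I \rangle$.

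For (ii), I would invoke the elementary fact that two subgroups commute element-wise whenever they possess commuting generating sets, so it suffices to show that $\tau_a^\chi$ and $\tau_b^\psi$ commute for all $a \in X_i$, $b \in X_j$ with $i \neq j$ and all characters $\chi, \psi$ of $T$. The standard conjugation identity $\phi \, \tau_b^\psi \, \phi^{-1} = \tau_{\phi(b)}^\psi$, valid for any algebra automorphism $\phi$ of $A$, further reduces this to proving $\tau_a^\chi(b) = b$.

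The crux of the argument is the following. Since $a$ and $b$ lie in different connected components of $\Delta(X)$ they are non-adjacent, so $ab = 0$. Decomposing $b$ along the $\ad_a$-eigenspaces, this forces $b \in A_0(a)$. The hypothesis $0 \in \mathcal{F}_{1_T}$ is exactly what is needed to guarantee that $A_0(a)$ sits inside the $1_T$-homogeneous component $A_{1_T}(a) = \bigoplus_{\lambda \in \mathcal{F}_{1_T}} A_\lambda(a)$ of the $T$-grading determined by $a$; and since $\tau_a^\chi$ acts on $A_{1_T}(a)$ as the scalar $\chi(1_T) = 1$, it fixes $b$. Combining (i) and (ii) then yields the central product decomposition. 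As a by-product, each $G(X_i)$ is actually normal in $G(X)$: any $\tau_c^\chi$ preserves the components of $\Delta(X)$, acting trivially on $X_j$ for $j \neq i$ by the argument above, and preserving its own component since $\tau_c^\chi$ is an algebra automorphism respecting the non-annihilation relation and fixing $c$.

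The main obstacle I anticipate is the single eigenspace-to-grading reduction in the third paragraph: recognising that the hypothesis $0 \in \mathcal{F}_{1_T}$ is precisely the ingredient that translates the combinatorial input $ab = 0$ into the algebraic statement $\tau_a^\chi(b) = b$. Everything else is a routine group-theoretic closure argument together with the standard conjugation identity for Miyamoto maps.
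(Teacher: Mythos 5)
Your proof is correct and follows essentially the same route as the paper: the paper isolates your key step as Lemma \ref{Tacomm}, showing that $ab=0$ forces $b\in A_0(a)$, which under the hypothesis $0\in\cF_{1_T}$ is fixed by every $\tau_a(\chi)$, whence $T_b^{\tau_a(\chi)}=T_{b^{\tau_a(\chi)}}=T_b$ and $[T_a,T_b]=1$; the central product then follows exactly as you argue. The only difference is cosmetic: the paper phrases the commutation at the level of the axis subgroups $T_a$ rather than individual generators, and does not spell out the (true but unneeded) normality remark.
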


However, for the algebra, the picture is more complicated and we give a partial result in this direction. In order to do so, we introduce a new concept.  A subspace $I \leq A$ is a \emph{quasi-ideal} if $I$ is invariant under multiplication with the axes $X$.  We show that a quasi-ideal is stable and $G(X)$-invariant.  The \emph{spine} of $A$ is defined as the quasi-ideal $Q(A, X)$ generated by the axes $X$. The algebra $A$ is called \emph{slender} if $A = Q(A, X)$.  We prove that the spine of an axial algebra is stable.  A fusion law $\cF$ is called \emph{Seress} if $0 \in \cF$ and $0 \star \lambda \subseteq \{\lambda \}$ for all $\lambda \in \cF$.  Our partial result is the following.

\begin{theorem*}
Let $A$ be an axial algebra with a Seress fusion law and $A_i = \la \la X_i \ra \ra$ be the axial subalgebra generated by the connected component $X_i$ of $\Dl$.  If all but possibly one $A_i$ are slender, then $A = \sum_{i \in I} A_i$.
\end{theorem*}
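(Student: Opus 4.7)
The plan is to reduce the theorem to showing that $A_i A_j = 0$ whenever $i \neq j$. Once this holds, $\sum_{i \in I} A_i$ is automatically closed under multiplication and, since it contains the generating set $X = \bigcup_i X_i$, it must equal $A$, giving the desired sum decomposition.

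The starting observation is that for $a \in X_i$ and $b \in X_j$ with $i \neq j$, the axes $a$ and $b$ lie in distinct connected components of $\Dl(X)$, so $ab = 0$, i.e.\ $b \in A_0(a)$. The Seress condition $0 \star 0 \subseteq \{0\}$ makes $A_0(a)$ a subalgebra of $A$, and therefore $A_j = \la\la X_j \ra\ra \subseteq A_0(a)$; in particular $a \cdot v = 0$ for every $a \in X_i$ and every $v \in A_j$.

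The heart of the proof will be an induction extending this from axes in $X_i$ to all of $A_i$, under the assumption that $A_i$ is full-bodied. The main tool is the Seress lemma: if $v \in A_0(a)$, then $a(uv) = (au)v$ for every $u \in A$, which follows from $0 \star \lm \subseteq \{\lm\}$ by decomposing $u$ into $\ad_a$-eigenvectors and checking the identity eigenspace by eigenspace. Since $A_i$ equals its body $Q(A_i, X_i)$, it is spanned by left-nested products $u = a_1(a_2(\cdots(a_k)\cdots))$ with each $a_\ell \in X_i$; I would prove by induction on the nesting depth $k$ that $uv = 0$ for every such $u$ and every $v \in A_j$, $j \neq i$. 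The base case $k = 1$ is the previous paragraph. For the inductive step, write $u = a_1 u'$ with $u'$ of smaller depth inside the body; since $v \in A_j \subseteq A_0(a_1)$, the Seress lemma gives $uv = (a_1 u')v = a_1(u'v) = 0$, using the inductive hypothesis $u'v = 0$.

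To close, the "all but possibly one" hypothesis ensures that for every pair $i \neq j$ at least one of $A_i, A_j$, say $A_i$, is full-bodied, so the induction above gives $A_i A_j = 0$; commutativity then yields $A_j A_i = 0$ as well. The main obstacle I foresee is making the induction on the body rigorous — in particular tracking that each left-multiplication by an axis in $X_i$ keeps $u'$ inside the body $Q(A_i, X_i)$ — but this is exactly built into the definition of a quasi-ideal, so the Seress lemma does the rest. The role of the exceptional summand $A_{i_0}$ is handled precisely because every interaction it has with another $A_j$ ($j \neq i_0$) can be studied from the full-bodied side $A_j$, so no direct argument on $A_{i_0}$ itself is ever required.
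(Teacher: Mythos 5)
Your proposal is correct and follows essentially the same route as the paper: the paper proves that the annihilator of $A_j$ inside $A_i$ is a quasi-ideal containing $X_i$ (hence contains the body $Q(A_i,X_i)=A_i$), which is exactly your induction on the nesting depth of left-nested products, packaged via the quasi-ideal language. The base case ($A_0(a)$ is a subalgebra containing $X_j$, hence $A_j$) and the inductive step via the Seress lemma match the paper's argument step for step.
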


Recall that an axial algebra $A$ is \emph{$m$-closed} if $A$ is spanned by products in the axes of length at most $m$.  Note that the spine is spanned by products in the axes of the form $x_1(x_2( \dots (x_{k-1} x_k) \dots)$.  So, in particular, every $3$-closed algebra is slender.  Hence, we should expect the above result to apply to a large class of Seress axial algebras.

The paper is organized as follows. In Section \ref{sec:axialalgebras}, we recall the definition of axial algebras and review some basic properties. We discuss automorphisms and the Miyamoto group in Section \ref{sec:auto}.  Here, we also introduce equivalence of sets of axes, stability and show that generation of axial algebras and the Miyamoto group are stable. Section \ref{sec:radform} introduces the radical $R(A,X)$ and we show that it is stable. We also introduce the projection graph and use it to prove results about ideals.  The Frobenius form is introduced and we prove some important properties.  The main theorem in this section is that the radical of the form coincides with the radical of the algebra. In Section \ref{sec:decomp}, we discuss sum decompositions of axial algebras and show when they are direct.  Finally, in Section \ref{sec:NAgraph}, we introduce the non-annihilating graph $\Delta$ and our conjecture on connected components of $\Delta$.  We introduce quasi-ideals and the spine $Q(A, X)$ and show they are both stable.  Finally, we prove results about the decomposition with respect to $\Delta$.

\medskip

We would like to thank Jonathan I. Hall for useful comments.

\section{Axial algebras}\label{sec:axialalgebras}

\subsection{Fusion laws}\label{sec:fusionrules}
Throughout the paper $\mathbb{F}$ is an arbitrary field.

\begin{definition}
A \emph{fusion law} over $\mathbb{F}$ is a finite set $\cF$ of elements of $\mathbb{F}$ together with a symmetric 
map $\star \colon \cF\times\cF\rightarrow 2^\cF$. A single instance $\lambda \star \mu$ is called a \emph{fusion rule}.
\end{definition}

Since the values of $\star$ can be arranged in a symmetric square table, similar to 
a multiplication table, we sometimes call a fusion law a fusion table.  We will often abuse notation and just write $\cF$ for the fusion law $(\cF, \star)$.
\begin{figure}[ht]
\begin{center}
\begin{minipage}[t]{0.17\linewidth}
\renewcommand{\arraystretch}{1.5}
\begin{tabular}[t]{c||c|c}
 & 1 & 0 \\
\hline\hline
$1$ & $1$ & \\
\hline
$0$ & & $0$
\end{tabular}
\end{minipage}
\begin{minipage}[t]{0.25\linewidth}
\renewcommand{\arraystretch}{1.5}
\begin{tabular}[t]{c||c|c|c}
 & $1$ & $0$ & $\eta$ \\
\hline\hline
$1$ & $1$ & & $\eta$ \\
\hline
$0$ & & $0$ & $\eta$ \\
\hline 
$\eta$ & $\eta$ & $\eta$ & $1,0$
\end{tabular}
\end{minipage}
\begin{minipage}[t]{0.33\linewidth}
\renewcommand{\arraystretch}{1.5}
\begin{tabular}[t]{c||c|c|c|c}
 & $1$ & $0$ & $\alpha$ & $\beta$ \\
\hline\hline
$1$ & $1$ & & $\alpha$ & $\beta$ \\
\hline
$0$ & & $0$ & $\alpha$ & $\beta$ \\
\hline 
$\alpha$ & $\alpha$ & $\alpha$ & $1,0$ & $\beta$ \\
\hline 
$\beta$ & $\beta$ & $\beta$ & $\beta$ & $1,0,\alpha$
\end{tabular}
\end{minipage}
\end{center}
\caption{Fusion laws $\cA$, $\cJ(\eta)$, and $\cM(\al,\bt)$}
\label{fusion laws}
\end{figure}
In Figure \ref{fusion laws}, we see three examples of fusion laws that have 
appeared in the literature.  In the tables, we abuse notation by neglecting 
to write the set symbols.  We also leave the entry blank to mean the empty set.

In the first example, the set $\cF=\cA$ consists of 
just the elements $1$ and $0$ of $\mathbb{F}$. Hence this is defined over 
every field $\mathbb{F}$.  In the second example $\cF=\cJ(\eta)=\{1,0,\eta\}$, 
where $\eta\in \mathbb{F}$ and $1\neq\eta\neq 0$. 
So this can be defined for any field $\mathbb{F}$ except $\mathbb{F}_2$.
Similarly, in the third example, $\cF=\cM(\al,\bt)=\{1,0,\al,\bt\}$, where 
$\al,\bt\in \mathbb{F}$, $\al,\bt\not\in\{1,0\}$, and $\al\neq\bt$. Hence, for this to 
make sense, the field $\mathbb{F}$ must have at least four elements.

Given a fusion law $\cF$ and a subset $\cH\subseteq\cF$, $\cF$ induces fusion rules on $\cH$ by defining
\[
\lm\circ\mu := (\lm\star\mu)\cap\cH \qquad \mbox{for } \lm,\mu\in\cH
.\]
We call such a fusion law on $\cH$ a sublaw or \emph{minor} of $\cF$. We say that 
$\cH$ is \emph{exact} if $\lm\circ\mu=\lm\star\mu$ for all $\lm,\mu\in\cH$, that 
is, if $\cH$ is \emph{closed} for $\star$. For example, $\cA$ is an exact minor 
of both $\cJ(\eta)$ and $\cM(\al,\bt)$. We see that $\cJ(\eta)$ is a minor of 
$\cM(\al,\bt)$ in two ways: when $\eta=\al$ and when $\eta=\bt$. However, it is 
exact only when $\eta=\al$.

\subsection{Axes and axial algebras}

Let $A$ be a commutative non-associative (that is, not necessarily associative) 
algebra over $\mathbb{F}$. The adjoint of $a \in A$, denoted by $\ad_a$, is the linear 
endomorphism of $A$ defined by $b\mapsto ab$ for $b\in A$. For $\lm\in \mathbb{F}$, let 
$A_\lm(a)$ denote the $\lm$-eigenspace of $\ad_a$. That is, 
$A_\lm(a)=\{b\in A : ab=\lm b\}$. Clearly, $A_\lm(a)\neq 0$ if and only 
if $\lm$ is an eigenvalue of $\ad_a$. For $\Lm\subseteq \mathbb{F}$, we write 
$A_\Lm(a)=\bigoplus_{\lm\in\Lm}A_\lm(a)$.

\begin{definition}
For a fusion law $\cF$, an element $a\in A$ is an \emph{$\cF$-axis} if the 
following hold:
\begin{enumerate}
\item [{\rm (A1)}] $a$ is an idempotent; that is, $a^2=a$;
\item [{\rm (A2)}] $\ad_a$ is semisimple and all eigenvalues of $\ad_a$ are in 
$\cF$; that is, $A=A_\cF(a)$;
\item [{\rm (A3)}] the fusion law $\cF$ controls products of eigenvectors: namely,
\[
A_\lm(a)A_\mu(a)\subseteq A_{\lm\star\mu}(a) \qquad \mbox{for }\lm,\mu\in\cF.
\]
\end{enumerate}   
\end{definition}

Note that, $a$ being an idempotent, $1$ is an eigenvalue of $\ad_a$. For this 
reason, we will always assume that $1\in\cF$.  We also allow for the possibility 
that $A_\lambda(a)$ is $0$ for some $\lambda \in \mathcal{F}$.

\begin{definition}
An $\cF$-axis $a$ is \emph{primitive} if $A_1(a)=\la a\ra$.  
\end{definition}

If $a$ is a primitive axis then $A_1(a)A_\lm(a)=A_\lm(a)$, for all $\lm\neq 0$, and 
$A_1(a)A_0(a)=0$. Therefore, for primitive axes, we only need to consider fusion 
rules $\cF$ satisfying $1\star\lm=\{\lm\}$ for $\lm\neq 0$ and $1\star 0=\emptyset$, 
provided that $0\in\cF$. All three fusion laws in Figure \ref{fusion laws} 
possess this property.

\begin{definition}
An \emph{$\cF$-axial algebra} is a pair $A = (A, X)$, where $A$ is a commutative non-associative algebra generated by the set $X$ of $\cF$-axes.  An axial algebra $(X, A)$ is \emph{primitive} if each axis in $X$ is primitive.
\end{definition}

We will usually abuse notation and just refer to $A$ as being an axial algebra without making reference to $\mathcal{F}$ and $X$ where they are clear.  We will also often consider just primitive axial algebras and so we will often skip this adjective.

It is easy to show that associative axial algebras are the same as $\cA$-axial 
algebras. Furthermore, these are exactly the direct sum algebras 
$\mathbb{F}\oplus \ldots\oplus \mathbb{F}$.

Given any $3$-transposition group $(G, D)$, one can define a Matsuo algebra which has basis given by the elements of $D$ and multiplication depending on the order of the product of the involutions.  These are examples of $\cJ(\eta)$-axial algebras.  For more details see the text before Example \ref{ex:Matsuo}.

Every idempotent in a Jordan algebra satisfies the fusion law $\cJ(\frac{1}{2})$. 
This is known as the \emph{Peirce decomposition}. Hence Jordan algebras generated 
by primitive idempotents are examples of $\cJ(\frac{1}{2})$-axial algebras. 

Finally, the $196,884$-dimensional real Griess-Norton algebra, whose automorphism 
group is the sporadic simple Monster group $M$, is an example of an axial algebra with fusion law $\cM(\frac{1}{4},\frac{1}{32})$.  We call an axial algebra with this fusion law an axial algebra of \emph{Monster type}.

Moreover, the Griess-Norton algebra is an example of a Majorana algebra. \emph{Majorana algebras}, as introduced by Ivanov  \cite[Section 8.6]{ivanov}, are axial algebras of Monster type over $\mathbb{F}=\RR$, satisfying certain additional properties. Historically, Majorana algebras were precursors of axial algebras.

%%%%%%%%%%%%%%%%%%%%%%%%%%%%%%%%%

\section{Automorphisms}\label{sec:auto}

\subsection{Axis subgroup}

The fusion laws $\cF$ which particularly interest us are those where the axes lead to automorphisms of the algebra. Let us extend the 
operation $\star$ to arbitrary subsets $\Lm$ and $\Mu$ of the fusion table $\cF$ via 
$\Lm\star\Mu:=\cup_{\lm\in\Lm,\mu\in\Mu}\lm\star\mu$.  

\begin{definition}
Suppose $T$ is an abelian group. A \emph{$T$-grading} of a fusion table $\cF$ is a partition 
$\cF=\cup_{t\in T}\cF_t$ of $\cF$ satisfying $\cF_s\star\cF_t\subseteq\cF_{st}$ 
for all $s,t\in T$.
\end{definition}

Note that we allow the possibility that some part $\cF_t$ is the empty set.
Suppose that $\cF$ is $T$-graded and let $A$ be an $\cF$-axial algebra. For $t\in T$, 
we set $A_t(a)=A_{\cF_t}(a)=\bigoplus_{\lm\in\cF_t}A_\lm(a)$. Clearly, we have 
$A=\bigoplus_{t\in T}A_t(a)$. Note that it follows from the above definition that 
$A_t(a)A_s(a)\subseteq A_{ts}(a)$, that is, we have a $T$-grading of the algebra $A$ 
for each axis $a$.  Note that, since $\cF_t$ may be empty, $A_t$ may be $0$ for some $t \in T$.

Let $T^\ast$ be the group of linear characters of $T$ over $\mathbb{F}$, that is, the set of all 
homomorphisms from $T$ to the multiplicative group of $\mathbb{F}$. For an axis $a$ and 
$\chi\in T^\ast$, consider the linear map $\tau_a(\chi) \colon A\to A$ defined by 
\[
u \mapsto \chi(t) u \qquad \mbox{for } u \in A_t(a)
\]
and extended linearly to $A$. Since $A$ is $T$-graded, this map $\tau_a(\chi)$ is an automorphism 
of $A$. Furthermore, the map sending $\chi$ to $\tau_a(\chi)$ is a homomorphism from 
$T^\ast$ to $\Aut(A)$.

\begin{definition}
We call the image $T_a$ of the map $\chi \mapsto \tau_a(\chi)$, the \emph{axis subgroup} of $\Aut(A)$ corresponding to $a$.
\end{definition}

Usually, $T_a$ is a copy of $T^\ast$, but occasionally, when some subspaces $A_t(a)$ are 
trivial, $T_a$ can be isomorphic to a factor group of $T^\ast$ over a non-trivial subgroup.

We will often consider fusion laws where $T = C_2$.  If $\mathrm{char}(\mathbb{F}) = 2$, then $T^* = 1$ and we get no automorphisms.  So, we will normally assume that $\mathrm{char}(\mathbb{F}) \neq 2$ when $T = C_2$.  In this case, $T^* = \{ \chi_1, \chi_{-1} \}$ where $\chi_1$ is the trivial character.  The automorphism $\tau_a(\chi_{-1})$ is (usually) non-trivial and we will denote it by $\tau_a$.  Then $T_a = \langle \tau_a \rangle \cong C_2$.  We will also write $C_2 = \{+, -\}$.

Indeed, among our examples of fusion laws in Figure \ref{fusion laws}, the fusion tables 
$\cJ(\eta)$ and $\cM(\al,\bt)$ are $C_2$-graded. The 
grading for $\cJ(\eta)$ is given by $\cJ(\eta)_+=\{1,0\}$ and $\cJ(\eta)_-=\{\eta\}$. Whereas for 
$\cM(\al,\bt)$, the grading is given by $\cM(\al,\bt)_+=\{1,0,\al\}$ and 
$\cM(\al,\bt)_- = \{\bt\}$. Hence in these cases the axis subgroups are of order $2$ 
(or $1$ if $A_-(a)=0$).

Recall that the Griess-Norton algebra $A$ is an axial algebra with fusion law $\cM(\frac{1}{4}, \frac{1}{32})$.  For an axis $a\in A$, the subgroup $T_a=\la\tau_a\ra$ has order two.  Here the involutions $\tau_a$ belong 
to the conjugacy class $2A$ in the Monster $M$. Furthermore, the mapping $a\mapsto\tau_a$ 
is a bijection between the set of all axes of $A$ and the class $2A$.

Recall now that every axial algebra $A$ comes with a set of generating axes $X$. 
In the following definition we slightly relax conditions on $X$ by allowing it to be 
an arbitrary set of axes from $A$.

\begin{definition}
The \emph{Miyamoto group} $G(X)$ of $A$ with respect to the set of 
axes $X$ is the subgroup of $\Aut(A)$ generated by the axis subgroups $T_a$, $a\in X$.
\end{definition}

Since the $2A$ involutions generate the Monster, for the Griess-Norton algebra, we have $G(X) = M$ where $X$ is the set of $2A$-axes.

\subsection{Closed sets of axes}

If $a$ is an axis and $g\in\Aut(A)$, then $a^g$ is again an axis. Indeed, 
it is easy to check that $A_\lm(a^g)=A_\lm(a)^g$ and, hence, for $\lm,\mu\in\cF$, 
we have $A_\lm(a^g)A_\mu(a^g)=A_\lm(a)^g A_\mu(a)^g=
(A_\lm(a)A_\mu(a))^g\subseteq A_{\lm\star\mu}(a)^g=A_{\lm\star\mu}(a^g)$. 

\begin{definition}
A set of axes $X$ is \emph{closed} if $X^\tau=X$ for all $\tau\in T_a$ with $a\in X$. 
Equivalently, $X^{G(X)}=X$.
\end{definition}

It is easy to see that the intersection of closed sets is again closed and so every $X$ 
is contained in the unique smallest closed set $\bar X$ of axes. We call $\bar X$ the 
\emph{closure} of $X$.

\begin{lemma}\label{barX}
For a set of axes $X$, we have that $\bar X=X^{G(X)}$ and, furthermore, $G(\bar X)=G(X)$.
\end{lemma}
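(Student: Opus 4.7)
The plan is to prove both assertions by exploiting the conjugation behaviour of axis subgroups: if $a$ is an axis and $g\in\Aut(A)$, then $T_{a^g}=g^{-1}T_a g$. I would first justify this quickly. For any character $\chi\in T^\ast$, compare the action of $\tau_{a^g}(\chi)$ with that of $g^{-1}\tau_a(\chi)g$ on $A_t(a^g)=A_t(a)^g$: given $u\in A_t(a)$, applying $g^{-1}\tau_a(\chi)g$ to $u^g$ yields $\chi(t)u^g$, which matches $\tau_{a^g}(\chi)(u^g)$. Hence $\tau_{a^g}(\chi)=g^{-1}\tau_a(\chi)g$ and therefore $T_{a^g}=g^{-1}T_a g$.

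Next, set $Y:=X^{G(X)}$. Since $1\in G(X)$, we have $X\subseteq Y$. I would verify that $Y$ is closed. For any $b\in Y$, write $b=a^g$ with $a\in X$ and $g\in G(X)$; by the conjugation formula, $T_b=g^{-1}T_a g\leq G(X)$. Thus $Y^{T_b}\subseteq Y^{G(X)}=X^{G(X)G(X)}=X^{G(X)}=Y$, and applying the same inclusion to $T_b^{-1}=T_b$ (as $T_b$ is a group) gives $Y^{T_b}=Y$. Consequently $Y^{G(Y)}=Y$, so $Y$ is closed by the equivalent form of the definition.

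To see that $Y=\bar X$, I would show $Y$ is contained in every closed set containing $X$. If $Z$ is such a closed set, then $T_a\leq G(Z)$ for each $a\in X\subseteq Z$, so $G(X)\leq G(Z)$, giving $Y=X^{G(X)}\subseteq Z^{G(Z)}=Z$. Hence $Y=\bar X$, proving the first assertion.

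For the second assertion, $X\subseteq\bar X$ immediately yields $G(X)\leq G(\bar X)$. For the reverse inclusion, every $b\in\bar X=X^{G(X)}$ has the form $b=a^g$ for some $a\in X$, $g\in G(X)$, and by the conjugation formula $T_b=g^{-1}T_a g\leq G(X)$. Since $G(\bar X)$ is generated by the $T_b$ with $b\in\bar X$, we conclude $G(\bar X)\leq G(X)$. I do not expect any serious obstacle here: everything reduces to the observation that axis subgroups are conjugated by automorphisms in the expected way, together with the fact that $G(X)$ absorbs its own conjugates of axis subgroups.
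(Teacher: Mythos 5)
Your proposal is correct and follows essentially the same route as the paper: both rest on the conjugation formula $T_{a^g}=T_a^g$ and the fact that $G(X)$ absorbs these conjugates, which shows $X^{G(X)}$ is closed and that $G(X^{G(X)})=G(X)$. The only cosmetic difference is that you verify minimality of $X^{G(X)}$ against an arbitrary closed set $Z\supseteq X$, whereas the paper applies the same containment argument directly to $\bar X$.
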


\begin{proof}
Since $X\subseteq\bar X$, we have that $G(X)\leq G(\bar X)$. Hence $X^{G(X)}\subseteq 
\bar X^{G(\bar X)}=\bar X$. To show the reverse inclusion, it suffices to prove that 
$X^{G(X)}$ is closed.

Suppose that $b\in X^{G(X)}$. Then $b=a^g$ for some $a\in X$ 
and $g\in G(X)$. Note that $\tau_{a^g}(\chi)=\tau_a(\chi)^g$ and so 
$T_b = T_{a^g}=T_a^g$. Since $T_a\leq G(X)$ and $g\in G(X)$, we have that $T_b=
T_a^g\leq G(X)^g=G(X)$. Hence, $G(X^{G(X)})=G(X)$. Clearly, $X^{G(X)}$ 
is invariant under $G(X)=G(X^{G(X)})$. This means that $X^{G(X)}$ is closed, proving 
that $\bar X=X^{G(X)}$ and also $G(\bar X)=G(X^{G(X)})=G(X)$.
\end{proof}

Turning again to the example of the Griess-Norton algebra, it is well-known that the Monster
$M$ can be generated by three $2A$ involutions, say, $\tau_a$, $\tau_b$, and $\tau_c$, 
for axes $a,b,c\in A$. Setting $X=\{a,b,c\}$, we have that $G(X)=\la T_a,T_b,T_c\ra=
\la\tau_a,\tau_b,\tau_c\ra=M$. Hence $\bar X=X^{G(X)}=X^M$ is the set of all axes of $A$, 
since $\{\tau_a,\tau_b,\tau_c\}^M$ is clearly all of $2A$. (We use the fact that the map 
sending an axis to the corresponding $2A$ involution is bijective.) So here $\bar X$ (of size approximately $9.7 \times 10^{19}$) is huge 
compared to the tiny $X$.

\begin{definition}
We say that sets $X$ and $Y$ of axes are \emph{equivalent} \textup{(}denoted $X\sim Y$\textup{)} if 
$\bar X=\bar Y$.
\end{definition}

Clearly, this is indeed an equivalence relation on sets of axes.

\begin{definition}
A property of an axial algebra is called \emph{stable} if it is invariant under equivalence of axes.
\end{definition}

In this paper, we will show that several properties of axial algebras are stable.  Lemma \ref{barX} gives us the first of these.

\begin{corollary}
The Miyamoto group of an axial algebra is stable.
\end{corollary}

Since $\bar X=X^{G(X)}$ and, similarly, $\bar Y=Y^{G(Y)}$, we have the following.

\begin{lemma}
Sets $X$ and $Y$ of axes are equivalent if and only if the following two conditions hold:
\begin{enumerate}
\item[$1.$] $G := G(X) = G(Y)$
\item[$2.$] Every $x\in X$ is $G$-conjugate 
to some $y\in Y$ and, vice versa, every $y\in Y$ is $G$-conjugate to some $x\in X$. \qed
\end{enumerate}
\end{lemma}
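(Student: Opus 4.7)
The plan is to use Lemma~\ref{barX} in both directions, since it identifies $\bar{X}$ with $X^{G(X)}$ and tells us $G(\bar X)=G(X)$, which is precisely what makes both conditions natural.

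For the forward direction, assume $X\sim Y$, i.e.\ $\bar X=\bar Y$. Applying Lemma~\ref{barX} to both sides gives $G(X)=G(\bar X)=G(\bar Y)=G(Y)$, establishing condition~$1$; call this common group $G$. For condition~$2$, since $\bar X=X^G$ and $\bar Y=Y^G$, any $x\in X\subseteq\bar X=\bar Y=Y^G$ is $G$-conjugate to some $y\in Y$, and by symmetry any $y\in Y$ is $G$-conjugate to some $x\in X$.

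For the reverse direction, assume conditions $1$ and $2$. Let $G:=G(X)=G(Y)$, so $\bar X=X^G$ and $\bar Y=Y^G$ by Lemma~\ref{barX}. Condition~$2$ says $X\subseteq Y^G=\bar Y$ and $Y\subseteq X^G=\bar X$. Since $\bar Y$ is closed under $G$, we get $\bar X=X^G\subseteq (\bar Y)^G=\bar Y$, and symmetrically $\bar Y\subseteq\bar X$. Hence $\bar X=\bar Y$, i.e.\ $X\sim Y$.

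There is essentially no obstacle here: the statement is a direct repackaging of the identities $\bar X=X^{G(X)}$ and $G(\bar X)=G(X)$ already proved in Lemma~\ref{barX}, together with the trivial observation that a $G$-closed set containing a set $S$ contains $S^G$. The only point requiring minor care is the symmetry step in the reverse direction, where one must remember that $\bar Y$ is preserved by all of $G=G(Y)$, not merely by the axis subgroups of elements of $Y$.
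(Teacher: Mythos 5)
Your proof is correct and follows exactly the route the paper intends: the paper omits the proof, remarking only that the lemma follows from $\bar X=X^{G(X)}$ and $\bar Y=Y^{G(Y)}$ (Lemma~\ref{barX}), and your argument is precisely the straightforward unpacking of that remark in both directions.
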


\subsection{Invariance}

Let $a \in X$ be an axis and $W$ be a subspace of $A$ invariant under the action of $\ad_a$. 
Since $\ad_a$ is semisimple on $A$, it is also semisimple on $W$, and so 
$W=\bigoplus_{\lm\in\cF}W_\lm(a)$, where $W_\lm(a)=W\cap A_\lm(a)=\{w\in W : aw=\lm w\}$. 

Let us note the following important property of axis subgroups $T_a$.

\begin{lemma} \label{invariant}
For an axis $a$, if a subspace $W\subseteq A$ is invariant under $\ad_a$ then $W$ is 
invariant under every $\tau_a(\chi)$, $\chi\in T^\ast$. \textup{(}That is, $W$ is invariant under 
the whole $T_a$.\textup{)}
\end{lemma}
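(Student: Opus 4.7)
The plan is to exploit the two pieces of structure already packaged into the statement: semisimplicity of $\ad_a$ on $A$ (condition (A2) of being an axis), and the fact that $\tau_a(\chi)$ acts as a scalar on each graded component $A_t(a)$.

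First I would observe that since $\ad_a$ is semisimple on $A$ and $W$ is $\ad_a$-invariant, the restriction of $\ad_a$ to $W$ is also semisimple (a standard fact: an invariant subspace of a semisimple operator is itself semisimple, because the characteristic polynomial of the restriction divides a product of distinct linear factors). This yields the eigenspace decomposition
\[
W = \bigoplus_{\lambda \in \cF} W_\lambda(a), \qquad W_\lambda(a) = W \cap A_\lambda(a),
\]
which is exactly the decomposition already noted in the paragraph preceding the lemma.

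Next I would regroup this decomposition according to the $T$-grading of $\cF$: setting $W_t(a) := \bigoplus_{\lambda \in \cF_t} W_\lambda(a) = W \cap A_t(a)$, we obtain
\[
W = \bigoplus_{t \in T} W_t(a).
\]
By definition, $\tau_a(\chi)$ acts on $A_t(a)$ by the scalar $\chi(t)$. Therefore, for any $w \in W_t(a) \subseteq A_t(a)$, we have $\tau_a(\chi)(w) = \chi(t)\, w \in W_t(a) \subseteq W$. Extending linearly over $t \in T$ shows that $\tau_a(\chi)(W) \subseteq W$, as required.

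There is no real obstacle here; the only point that needs a brief justification is that semisimplicity of $\ad_a$ transfers from $A$ to the invariant subspace $W$, and this is a standard consequence of the minimal polynomial of the restriction dividing that of $\ad_a$. Once the graded decomposition of $W$ is in hand, the invariance under $\tau_a(\chi)$, and hence under all of $T_a$, is immediate from the scalar action of $\tau_a(\chi)$ on each graded piece.
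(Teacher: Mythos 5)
Your proof is correct and follows essentially the same route as the paper: decompose $W$ into eigenspaces $W_\lambda(a)=W\cap A_\lambda(a)$ using semisimplicity of $\ad_a$ on the invariant subspace, then note that $\tau_a(\chi)$ acts as a scalar on each graded piece and hence preserves each $W_\lambda(a)$ (the paper phrases this per eigenvalue $\lambda$ rather than regrouping by $t\in T$, but the argument is identical).
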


\begin{proof}
We have already observed that if $W$ is invariant under $\ad_a$ then 
$W=\bigoplus_{\lm\in\cF}W_\lm(a)$. Recall that $W_\lm(a)$ is a subspace of $A_\lm(a)$. Since 
$\tau=\tau_a(\chi)$ acts on $A_\lm(a)$ as a scalar transformation, it leaves invariant every 
subspace of $A_\lm(a)$. In particular, $W_\lm(a)^\tau=W_\lm(a)$ for every $\lm$, and so 
$W^\tau=W$.
\end{proof}

For example, ideals of $A$ are invariant under $\ad_a$ for all axes $a$.  Hence we have the following:

\begin{corollary}\label{idealGinvariant}
Every ideal $I$ of $A$ is $G(X)$-invariant for any set of axes $X$ in $A$.
\end{corollary}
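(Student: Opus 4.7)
The plan is to observe that this is essentially immediate from Lemma \ref{invariant}, with only the routine check that an ideal is invariant under each adjoint $\ad_a$. I would structure the argument in three small steps.

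First, I would fix an arbitrary axis $a \in X$ and verify that the ideal $I$ is invariant under $\ad_a$. This is the definitional content of being an ideal: since $a \in A$, multiplication by $a$ sends $I$ into $I$, i.e.\ $\ad_a(I) = aI \subseteq I$. So the hypothesis of Lemma \ref{invariant} holds with $W = I$.

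Next, I would apply Lemma \ref{invariant} directly to conclude that $I$ is invariant under every $\tau_a(\chi)$ with $\chi \in T^\ast$, and therefore invariant under the whole axis subgroup $T_a$. This step is where the work actually was, but the lemma is already in hand, so there is nothing more to do here.

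Finally, I would note that by the definition of the Miyamoto group, $G(X) = \langle T_a : a \in X \rangle$. Since $I$ is invariant under each generating subgroup $T_a$, it is invariant under the group they generate, which is $G(X)$. There is no real obstacle: the entire content sits in Lemma \ref{invariant}, and the corollary is merely its specialisation to the case where $W$ is an ideal, combined with the observation that ideals are automatically $\ad_a$-invariant for every $a \in A$ (and in particular for every $a \in X$).
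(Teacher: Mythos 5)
Your proposal is correct and matches the paper exactly: the paper also treats this corollary as an immediate consequence of Lemma \ref{invariant}, noting that ideals are $\ad_a$-invariant for every axis $a$ and hence invariant under each $T_a$, and therefore under the group $G(X)$ these generate. Nothing further is needed.
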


Let us now prove the following important property. We denote by $\la\la X\ra\ra$ the 
subalgebra of $A$ generated by the set of axes $X$.

\begin{theorem} \label{same subalgebra}
Suppose that $X\sim Y$. Then $\la\la X\ra\ra=\la\la Y\ra\ra$. In particular, if $X$ 
generates $A$ then so does $Y$.  Hence, generation of axial algebras is stable.
\end{theorem}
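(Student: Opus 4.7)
The plan is to prove the stronger fact that $\langle\langle X\rangle\rangle = \langle\langle \bar X\rangle\rangle$ for every set of axes $X$. Once this is established, the theorem follows immediately: given $X\sim Y$, we have $\bar X=\bar Y$ by definition, so
\[
\langle\langle X\rangle\rangle = \langle\langle \bar X\rangle\rangle = \langle\langle \bar Y\rangle\rangle = \langle\langle Y\rangle\rangle.
\]
In particular, if $\langle\langle X\rangle\rangle = A$ then $\langle\langle Y\rangle\rangle = A$ as well, giving the ``in particular'' clause. Stability of generation is just a restatement of this.

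To prove $\langle\langle X\rangle\rangle = \langle\langle \bar X\rangle\rangle$, the inclusion $\subseteq$ is immediate from $X \subseteq \bar X$. For the reverse inclusion, set $B := \langle\langle X\rangle\rangle$; it suffices to show $\bar X \subseteq B$, because then $\langle\langle \bar X\rangle\rangle \subseteq \langle\langle B\rangle\rangle = B$. The key step is to verify that $B$ is $G(X)$-invariant. For any $a \in X \subseteq B$, the subalgebra $B$ is closed under multiplication by $a$, i.e.\ $B$ is $\ad_a$-invariant. By Lemma \ref{invariant}, this forces $B$ to be invariant under the whole axis subgroup $T_a$. Taking this over all $a \in X$ and using that $G(X)$ is generated by the $T_a$ with $a \in X$, we conclude that $B^{G(X)} = B$. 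Since $X \subseteq B$, we deduce $\bar X = X^{G(X)} \subseteq B$, as needed.

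I do not expect a genuine obstacle here: the argument is a direct assembly of Lemma \ref{invariant} (to pass from $\ad_a$-invariance of the subalgebra to $T_a$-invariance) and Lemma \ref{barX} (to identify $\bar X$ with $X^{G(X)}$). The only point that requires a moment's care is making sure that $\ad_a$-invariance of $B$ is available for every $a\in X$, which is automatic because $B$ is a subalgebra containing $X$; one does \emph{not} need $B$ to be an ideal or to be closed under $\ad_b$ for arbitrary $b\in A$.
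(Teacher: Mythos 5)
Your argument is correct and is essentially the paper's own proof: both hinge on observing that $B=\la\la X\ra\ra$ is $\ad_a$-invariant for each $a\in X$, invoking Lemma \ref{invariant} to get $G(X)$-invariance, and concluding $\bar X=X^{G(X)}\subseteq B$. The only cosmetic difference is that you route through the intermediate identity $\la\la X\ra\ra=\la\la\bar X\ra\ra$, whereas the paper concludes $Y\subseteq\bar Y=\bar X\subseteq B$ directly and finishes by symmetry.
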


\begin{proof}
Let $B=\la\la X\ra\ra$ and $C=\la\la Y\ra\ra$. Note that $B$ is invariant under $\ad_a$ 
for every $a\in X$. Hence $B$ is $G(X)$-invariant. Clearly, this means that 
$\bar X=X^{G(X)}\subseteq B$. Therefore, $Y\subseteq\bar Y=\bar X\subseteq B$, proving 
that $C\subseteq B$. Symmetrically, also $B\subseteq C$, and so $B=C$.
\end{proof}

We note that the converse does not hold.  That is, there exist sets of axes $X$ and $Y$ 
which are inequivalent, but which both generate the same axial algebra $A$.  For example, 
there is an axial algebra of dimension $9$ which is generated by a closed set of $6$ axes 
(and has shape $3\textrm{C}2\textrm{A}$ and Miyamoto group $S_4$) 
\cite[Table 40]{algorithm}.  However, it is also generated (in fact, spanned by) a 
closed set of $9$ axes.  Since both sets are closed but of different sizes, they are 
clearly inequivalent.

If $A$ is the Griess-Norton algebra and $a$, $b$, and $c$ are axes such that 
$M=\la\tau_a,\tau_b,\tau_c\ra$. Setting $B=\la\la a,b,c\ra\ra$, we see that $B$ is 
invariant under $M$. Since we have a bijection between axes and the involutions from 
$2A$, all axes are conjugate under $M$. This shows that $B$ contains all axes from $A$, 
that is, $B=A$, since $A$ is generated by axes. We have shown that 
$A=\la\la a,b,c\ra\ra$, which means that, despite its large dimension, $A$ can be 
generated by just three axes.

%%%%%%%%%%%%%%%%%%%%%%%%%%%%%%%%%

\section{Ideals, the radical and the Frobenius form}\label{sec:radform}

Throughout this section, suppose $A$ is an axial algebra with fusion law $\cF$ over a field $\mathbb{F}$ and let $X$ be the set of primitive axes which generate $A$.

\subsection{The radical}

Recall that if $W$ is a subspace invariant under the action of $\ad_a$ for an axis $a$ then $W=\bigoplus_{\lm\in\cF}W_\lm(a)$, where $W_\lm(a)=W\cap A_\lm(a)$.

\begin{lemma}\label{axisinvariantsubspace}
Let $a \in X$ be a primitive axis and $W$ be a subspace of $A$ invariant under the action of $\ad_a$.  Then, $a \in W$ if and only if $W_1(a)=W\cap A_1(a)$ is not $0$.
\end{lemma}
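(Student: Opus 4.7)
The statement is an if-and-only-if about a primitive axis, so my plan is to handle each direction separately, with both relying essentially only on the definition of primitivity ($A_1(a) = \langle a \rangle$) together with the fact that the idempotent $a$ itself lies in $A_1(a)$.

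For the forward implication, I would observe that $a^2 = a = 1 \cdot a$ means $a \in A_1(a)$. So if $a \in W$, then $a \in W \cap A_1(a) = W_1(a)$, which immediately gives $W_1(a) \neq 0$. Notice that this direction does not even use that $W$ is $\ad_a$-invariant, nor primitivity.

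For the reverse implication, this is where primitivity is essential. The decomposition $W = \bigoplus_{\lambda \in \cF} W_\lambda(a)$, which we are given by the $\ad_a$-invariance of $W$ together with semisimplicity of $\ad_a$, tells us that $W_1(a)$ is a subspace of $A_1(a)$. Since $a$ is primitive, $A_1(a) = \langle a \rangle$ is one-dimensional. Thus any nonzero subspace of $A_1(a)$ must equal $\langle a \rangle$, so $W_1(a) \neq 0$ forces $W_1(a) = \langle a \rangle$, and in particular $a \in W_1(a) \subseteq W$.

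There is no real obstacle here; the argument is essentially bookkeeping. The only subtlety worth flagging is that the direct sum decomposition of $W$ into $W_\lambda(a)$'s (already noted in the preamble to the lemma) genuinely requires both that $\ad_a$ be semisimple on $A$ and that $W$ be $\ad_a$-invariant — without these, $W_1(a) = W \cap A_1(a)$ would still make sense but the equivalence could fail in spirit. For the proof itself, however, we only need that $a \in A_1(a)$ (for one direction) and that $\dim A_1(a) = 1$ (for the other), both immediate from the hypotheses.
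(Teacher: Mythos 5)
Your proof is correct and matches the paper's argument: the paper phrases it as a dichotomy (since $A_1(a)=\la a\ra$ is one-dimensional, either $W_1(a)=A_1(a)\ni a$ or $W_1(a)=0$), which is exactly your two directions combined. Your added remark that the forward implication needs neither invariance nor primitivity is accurate but not a departure from the paper's approach.
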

\begin{proof}
Since $a$ is primitive, $A_1(a)=\la a\ra$ is $1$-dimensional. In particular, we have the dichotomy: 
either $a\in W$ and $W_1(a) = A_1(a) = \langle a \rangle$, or $W_1(a)=0$, and so $W\subseteq A_{\cF\setminus\{1\}}(a)$.
\end{proof}

In particular, the above lemma holds for ideals.  We begin by considering those ideals which do not contain any axes.

\begin{definition}
The \emph{radical} $R(A,X)$ of $A$ with respect to the generating set of primitive axes $X$ is 
the unique largest ideal of $A$ containing no axes from $X$.
\end{definition}

Abusing notation, we will drop either $A$ or $X$ where it is clear from context.  By Lemma \ref{axisinvariantsubspace}, an ideal, which clearly is invariant under the action of $\ad_a$ for all $a\in X$, 
contains no axes from $X$ if and only if it is contained in 
$\cap_{a\in X}A_{\cF\setminus\{1\}}(a)$. Clearly, the sum of all such ideals is again an ideal 
not containing any axes from $X$, so it is in the radical.  Hence there is indeed a unique largest ideal with the above property.

The radical $R(A,X)$ of an axial algebra $A$ is defined with respect to a given generating 
set of axes $X$. What if we take a different generating set?

\begin{theorem}
If $X \sim Y$ are two equivalent sets of axes, then $R(A, X) = R(A, Y)$.  That is, the radical of an axial algebra is stable.
\end{theorem}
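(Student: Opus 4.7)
The plan is to show that the radical $R(A,X)$ depends only on the closure $\bar X$, and then invoke $\bar X = \bar Y$.

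First I would observe that by Corollary \ref{idealGinvariant}, every ideal $I$ of $A$ is $G(X)$-invariant. So, if $I$ contains no axis from $X$, then for every $a\in X$ and $g\in G(X)$, the element $a^g$ cannot lie in $I$ either: if $a^g \in I$, then since $I^{g^{-1}}=I$ we would have $a=(a^g)^{g^{-1}}\in I$, contradicting our assumption. Using Lemma \ref{barX}, which tells us $\bar X = X^{G(X)}$, this shows that an ideal contains no axis from $X$ if and only if it contains no axis from $\bar X$ (one direction is trivial since $X \subseteq \bar X$).

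Consequently, $R(A,X)$ is precisely the largest ideal of $A$ containing no axis from $\bar X$; and similarly, $R(A,Y)$ is the largest ideal containing no axis from $\bar Y$. Since $X\sim Y$ means exactly $\bar X = \bar Y$, the two characterisations produce the same ideal, and so $R(A,X)=R(A,Y)$.

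No step here is delicate; the only point requiring care is the use of $G(X)$-invariance to pass from ``no axis of $X$'' to ``no axis of $\bar X = X^{G(X)}$'', but this is immediate from Corollary \ref{idealGinvariant} together with Lemma \ref{barX}. Everything else is formal manipulation of the definition of $R(A,X)$ as a largest ideal, together with the symmetry between $X$ and $Y$ that gives both inclusions $R(A,X)\subseteq R(A,Y)$ and $R(A,Y)\subseteq R(A,X)$.
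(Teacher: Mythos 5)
Your proof is correct and follows essentially the same route as the paper: both reduce to the observation that, by the $G(X)$-invariance of ideals (Corollary \ref{idealGinvariant}), an ideal contains no axis from $X$ if and only if it contains no axis from $\bar X = X^{G(X)}$, so the radical depends only on the closure, and $\bar X = \bar Y$ finishes the argument. The only point the paper adds, which you tacitly assume, is that by Theorem \ref{same subalgebra} the equivalent set $Y$ also generates $A$, so that $R(A,Y)$ is actually defined.
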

\begin{proof}
It suffices to show that $R(A, X)=R(A, \bar X)$. 
Clearly, the ideal $R(\bar X)$ does not contain any axis from $X$, and so 
$R(\bar X)\subseteq R(X)$. Conversely, by Corollary \ref{idealGinvariant}, every ideal of $A$ is 
invariant under $G(X)$. Since $R(X)$ contains no axis from $X$, it follows that 
$R(X)$ contains no axis from $X^{G(X)}=\bar X$. So $R(X)\subseteq R(\bar X)$ and therefore $R(\bar X)=R(X)$. 

If $Y$ is equivalent to $X$, by Theorem \ref{same subalgebra}, $Y$ also generates $A$ and so $R(Y)$ is defined. Furthermore, 
$R(Y)=R(\bar Y)=R(\bar X)=R(X)$.
\end{proof}

This shows that our notion of the radical behaves well under the natural changes of generating sets of axes.

%%%%%%%%%%%%%%%%%%%%%%%%

\subsection{Frobenius form}
Sometimes an $\cF$-axial algebra also admits a bilinear form which behaves well with respect to the multiplication in the algebra.

\begin{definition}
A \emph{Frobenius} form on an $\cF$-axial algebra $A$ is a \textup{(}non-zero\textup{)} bilinear form $(\cdot , \cdot) \colon A \times A \to \mathbb{F}$ which associates with the algebra product.  That is,
\[
(a,bc)=(ab,c) \qquad \mbox{for all } a,b,c\in A.
\]
\end{definition}

Note that we do not place any restriction on the value of $(a,a)$ for axes $a \in X$.  This differs from definitions given in previous papers.  However, several key facts still hold.  A Frobenius form is still necessarily symmetric \cite[Proposition 3.5]{hrs}. We also have the following important property:

\begin{lemma} \label{orthogonal}
For an axis $a$, the direct sum decomposition $A=\bigoplus_{\lm\in\cF}A_\lm(a)$ is orthogonal 
with respect to every Frobenius form $(\cdot , \cdot)$ on $A$.
\end{lemma}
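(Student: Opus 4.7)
The plan is to use the Frobenius (associating) property together with the eigenvector equations, exploiting the fact that $\lambda \neq \mu$ forces a scalar difference that can only be killed by the form vanishing.

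First I would pick arbitrary $\lambda, \mu \in \cF$ with $\lambda \neq \mu$, and take $u \in A_\lambda(a)$ and $v \in A_\mu(a)$. The goal is to show $(u,v) = 0$, which by bilinearity extends to orthogonality of the full subspaces $A_\lambda(a)$ and $A_\mu(a)$.

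Next I would compute $(au, v)$ in two ways. On one hand, since $u \in A_\lambda(a)$, we have $au = \lambda u$, so
\[
(au, v) = (\lambda u, v) = \lambda (u,v).
\]
On the other hand, by the Frobenius property $(au, v) = (a, uv) = (u, av)$ (using symmetry of the form, which holds by \cite[Proposition 3.5]{hrs}), and since $v \in A_\mu(a)$, we have $av = \mu v$, so
\[
(u, av) = (u, \mu v) = \mu (u, v).
\]
Equating the two expressions gives $(\lambda - \mu)(u,v) = 0$, and since $\lambda \neq \mu$ in $\mathbb{F}$, we conclude $(u,v) = 0$.

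There is no real obstacle here: the argument is the classical orthogonality-of-eigenspaces argument for a self-adjoint operator, with $\ad_a$ playing the role of the self-adjoint operator (self-adjointness being precisely the Frobenius identity $(au, v) = (u, av)$). Once we have this, the direct sum decomposition $A = \bigoplus_{\lambda \in \cF} A_\lambda(a)$ is orthogonal termwise, completing the proof.
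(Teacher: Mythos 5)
Your proof is correct and is essentially identical to the paper's: both compute $(au,v)$ two ways using the eigenvector equations and the associating property to get $(\lm-\mu)(u,v)=0$. The only cosmetic difference is the intermediate step (the paper routes through $(au,v)=(ua,v)=(u,av)$ via commutativity, while you pass through $(a,uv)$ and symmetry), which changes nothing.
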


\begin{proof}
Suppose $u\in A_\lm(a)$ and $v\in A_\mu(a)$ for $\lm\neq\mu$. Then $\lm(u,v)=(\lm u,v)=
(au,v)=(ua,v)=(u,av)=(u,\mu v)=\mu(u,v)$. Since $\lm\neq\mu$, we conclude that $(u,v)=0$.
\end{proof}

Let $a$ be a primitive axis.  Then we may decompose $u \in A$ with respect to $a$ as 
$u = \bigoplus_{\lambda \in \cF} u_\lambda$, where $u_\lambda \in A_\lambda(a)$.  We call 
$u_\lambda$ the \emph{projection} of $u$ onto $A_\lambda(a)$.  Focusing on the projection 
$u_1$, as $a$ is primitive, $u_1 = \phi_a(u)a$ for some $\phi_a(u)$ in $\mathbb{F}$.  It 
is easy to see that $\phi_a$ is linear in $u$.

\begin{lemma}\label{frobprim}
Let $(\cdot, \cdot)$ be a Frobenius form on a primitive axial algebra $A$.  Then,
$(a,u) = \phi_a(u)(a,a)$ for any axis $a \in X$ and $u \in A$.
\end{lemma}

\begin{proof}
We decompose $u = \bigoplus_{\lambda \in \cF} u_\lambda$ with respect to $a$, where 
$u_\lambda \in A_\lambda(a)$.  Now, by Lemma \ref{orthogonal}, 
$(a,u) = (a,  \bigoplus_{\lambda \in \cF} u_\lambda) = (a, u_1) = \phi_a(u)(a,a)$.
\end{proof}

Let us now explore the connection of the Frobenius form to the radical of $A$.  We write 
$A^\perp$ for the radical of the Frobenius form; that is,
\[
A^\perp=\{u\in A : (u,v)=0\mbox{ for all }v\in A\}
\]

\begin{lemma} \label{Frobenius radical}
The radical $A^\perp$ is an ideal of $A$. 
\end{lemma}

\begin{proof}
If $u\in A^\perp$ and $v,w\in A$, then $(uv,w)=(u,vw)=0$ and so $uv\in A^\perp$. Since 
$( \cdot, \cdot)$ is also bilinear, $A^\perp$ is an ideal.  
\end{proof}

It follows from Lemma \ref{frobprim} that a primitive axis $a$ is contained 
in $A^\perp$ if and only if $(a,a)=0$. Therefore, $A^\perp$ contains no axes from the 
generating set $X$ if and only if $(a,a)\neq 0$ for all $a\in X$.  The following is a 
generalisation of Proposition 2.7 in \cite{jordan}.

\begin{theorem} \label{radical of form}
Let $A = (A, X)$ be a primitive axial algebra with a Frobenius form. Then, the radical 
$A^\perp$ of the Frobenius form coincides with the radical $R(A, X)$ of $A$ if and only 
if $(a,a)\neq 0$ for all $a\in X$.
\end{theorem}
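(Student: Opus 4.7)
The forward direction is essentially immediate. If $A^\perp = R(A,X)$, then by the definition of the radical, $A^\perp$ contains no axes of $X$. By Lemma \ref{axes in the radical}, a primitive axis $a \in X$ lies in $A^\perp$ precisely when $(a,a)=0$, so we must have $(a,a)\neq 0$ for every $a\in X$.

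For the converse, assume $(a,a)\neq 0$ for all $a\in X$. The easy containment $A^\perp \subseteq R(A,X)$ follows by combining Lemma \ref{axes in the radical} (which gives that $A^\perp$ is an ideal and, under our hypothesis, contains no axes of $X$) with the maximality clause in the definition of $R(A,X)$. The real work is the reverse containment $R(A,X)\subseteq A^\perp$, which I would establish by induction on the length of products of axes, mirroring the strategy of Proposition \ref{frob}(2).

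Concretely, fix $u\in R(A,X)$. Because $A$ is spanned by products of axes, it suffices to prove $(u,v)=0$ for every $v$ that is a product of axes, and I would do this by induction on the length of $v$. For the base case $v=a\in X$, Proposition \ref{frob}(1) gives $(u,a)=(a,u)=\phi_a(u)(a,a)$, and since $(a,a)\neq 0$ it is enough to show $\phi_a(u)=0$. But $R(A,X)$ is an ideal, hence invariant under $\ad_a$, and contains no axis of $X$; applying Lemma \ref{axisinvariantsubspace} to $W=R(A,X)$ yields $R(A,X)\cap A_1(a)=0$. Writing $u=\sum_{\lambda\in\cF}u_\lambda$ with $u_\lambda\in A_\lambda(a)$, we conclude $u_1=0$, i.e.\ $\phi_a(u)=0$. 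For the inductive step, write $v=v_1v_2$ with $v_1,v_2$ shorter products of axes; then associativity of the form gives $(u,v)=(u,v_1v_2)=(uv_1,v_2)$, and because $R(A,X)$ is an ideal, $uv_1\in R(A,X)$. The inductive hypothesis, applied to the element $uv_1\in R(A,X)$ and the shorter product $v_2$, delivers $(uv_1,v_2)=0$, completing the induction.

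The main obstacle is the base case of the induction: one needs to recognise that the orthogonality of distinct eigenspaces (Lemma \ref{orthogonal}), together with primitivity and the ideal property, forces the projection of any element of $R(A,X)$ onto each $A_1(a)$ to vanish. Once this is in place, the associativity of the Frobenius form lets the induction on product length propagate the vanishing from axes to all of $A$.
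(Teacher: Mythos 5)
Your proposal is correct and follows essentially the same route as the paper: the forward direction and the containment $A^\perp\subseteq R(A,X)$ via Lemma \ref{axes in the radical}, and then the reverse containment by induction on the length of products of axes using associativity of the form and the ideal property of $R(A,X)$. The only differences are cosmetic — the paper applies associativity as $(w_1w_2,R)=(w_1,w_2R)$ rather than $(u,v_1v_2)=(uv_1,v_2)$, and handles the base case directly via $R\subseteq A_{\cF\setminus\{1\}}(a)$ and Lemma \ref{orthogonal} instead of through $\phi_a$.
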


\begin{proof}
Let $R=R(A, X)$. If $A^\perp=R$ then $A^\perp$ contains no axes from $X$, and so, by 
Lemma \ref{frobprim}, we have that $(a,a)\neq 0$ for all $a\in X$. 

Conversely, suppose that $(a,a)\neq 0$ for all $a\in X$. Then $A^\perp$ contains no 
axes from $X$. Hence $A^\perp\subseteq R$. It remains to show that $R\subseteq A^\perp$, 
that is, that $R$ is orthogonal to the entire $A$. Since $X$ generates $A$, the algebra 
$A$ is linearly spanned by all (non-associative) products $w$ of the axes from $X$. Hence 
we just need to show that $R$ is orthogonal to each product $w$. We prove this property 
by induction on the length of the product $w$.

If the length of $w$ is one then $w=a$ is an axis from $X$. Since $a\not\in R$, we have 
that $R\subseteq A_{\cF\setminus\{1\}}(a)$, which by Lemma \ref{orthogonal} means that 
$R$ is orthogonal to $w$, as claimed. Now suppose that the length of $w$ is at least two. 
Then $w=w_1w_2$ for products $w_1$ and $w_2$ of shorter length. By the inductive 
assumption, we know that $R$ is orthogonal to both $w_1$ and $w_2$. Therefore, 
$(w,R)=(w_1w_2,R)=(w_1,w_2R)=0$, as $R$ is an ideal. So, $R\subseteq A^\perp$ and hence
$R=A^\perp$.
\end{proof}  

It is often additionally required that the Frobenius form satisfy $(a,a)=1$ for each axis 
$a$. In view of Lemma \ref{frobprim}, we call the Frobenius form satisfying $(a,a)=1$ for 
all generating axes $a$ the \emph{projection form}.  We will see later that the 
projection form, when it exists, is unique.

The existence of a projection form is included in the axioms of Majorana algebras by 
Ivanov. He further requires the projection form to be 
positive-definite. (Recall that Majorana algebras are defined over $\mathbb{F}=\RR$.) In 
particular, we have the following.

\begin{corollary}\label{Majalg}
Every Majorana algebra has trivial radical; that is, every non-zero ideal contains 
one of the generating primitive axes.
\end{corollary}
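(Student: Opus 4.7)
The plan is to combine Theorem \ref{radical of form} with the positive-definiteness assumption in the definition of a Majorana algebra. Concretely, a Majorana algebra is a primitive $\cM(\tfrac{1}{4},\tfrac{1}{32})$-axial algebra over $\RR$ that admits the projection form, i.e.\ a Frobenius form $(\cdot,\cdot)$ satisfying $(a,a)=1$ for every generating axis $a\in X$, and this form is additionally required to be positive-definite.

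First, since $(a,a)=1\neq 0$ for all $a\in X$, the hypotheses of Theorem \ref{radical of form} are satisfied, so we get the identification $R(A,X)=A^\perp$. Second, the positive-definiteness of $(\cdot,\cdot)$ means that $(u,u)>0$ for every non-zero $u\in A$; in particular any $u\in A^\perp$ must satisfy $(u,u)=0$ and hence $u=0$. Therefore $A^\perp=0$, and combining with the previous step yields $R(A,X)=0$.

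For the second assertion, let $I$ be a non-zero ideal of $A$. By the definition of the radical, if $I$ contained no axis from $X$ then $I\subseteq R(A,X)=0$, contradicting $I\neq 0$. Hence $I$ must contain some axis from the generating set $X$, which is exactly the claim. There is no genuine obstacle here since the heavy lifting is already packaged in Theorem \ref{radical of form}; the only thing one must do is observe that positive-definiteness forces $A^\perp=0$.
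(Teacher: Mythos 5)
Your proof is correct and follows essentially the same route as the paper: both invoke Theorem \ref{radical of form} (using that $(a,a)\neq 0$ for all axes, which you get from the projection-form axiom and the paper gets from positive-definiteness) and then use positive-definiteness to conclude $A^\perp=0$. The extra unpacking of the second assertion from the definition of the radical is fine but not needed beyond the paper's version.
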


\begin{proof}
Indeed, since the Frobenius form is positive definite, we have that $(u,u)>0$ for every 
$u\neq 0$. In particular, this is true for axes, and so, by Theorem \ref{radical of form}, 
the radical of the algebra is the same as the radical of the Frobenius form, which is 
zero.
\end{proof}

For $\cJ(\eta)$-axial algebras, which are called \emph{axial algebras of Jordan type 
$\eta$}, we do not need to assume the existence of a projection form.  Every axial algebra 
of Jordan type automatically admits a projection form \cite{primitivejordan}. Hence, we 
can state the following.

\begin{corollary}
The radical of every algebra of Jordan type coincides with the radical of its projection 
form.
\end{corollary}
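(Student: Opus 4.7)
The plan is to deduce the corollary directly from Theorem \ref{radical of form}, once the appropriate Frobenius form has been identified. By definition, an axial algebra of Jordan type is a primitive $\cJ(\eta)$-axial algebra, so I can invoke the existence result cited from \cite{primitivejordan}: every such algebra carries a projection form, that is, a Frobenius form $(\cdot,\cdot)$ satisfying $(a,a) = 1$ for each generating axis $a \in X$. Moreover, as noted just before Corollary \ref{Majalg}, the projection form is unique when it exists, so the phrase ``its projection form'' in the statement is unambiguous.

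With the projection form in hand, I simply verify the hypothesis of Theorem \ref{radical of form}, namely that $(a,a) \neq 0$ for every $a \in X$. This is immediate since $(a,a) = 1$. Applying the theorem gives $A^\perp = R(A, X)$, which is precisely the assertion of the corollary.

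There is essentially no obstacle here: the content is entirely contained in Theorem \ref{radical of form} together with the existence statement for the projection form in \cite{primitivejordan}. The one thing worth underlining when writing the proof is that the corollary is genuinely an unconditional statement in the Jordan type setting, in contrast to Theorem \ref{radical of form} where the non-vanishing condition $(a,a) \neq 0$ has to be assumed; here the normalisation $(a,a) = 1$ built into the definition of the projection form discharges that hypothesis automatically.
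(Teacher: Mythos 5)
Your argument is correct and is exactly the paper's (implicit) one: the cited existence of the projection form with $(a,a)=1$ for all axes discharges the non-vanishing hypothesis of Theorem \ref{radical of form}, which then gives $A^\perp = R(A,X)$. Nothing further is needed.
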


We wish to give an example, but first we must define the class of Matsuo algebras.  For 
any group of $3$-transpositions $(G, D)$, we define the \emph{Matsuo algebra} $A$ with 
respect to $(G, D)$ which has basis $D$ and multiplication given by
\[
ab = \begin{cases} a & \mbox{if } a = b \\
 0 & \mbox{if } o(ab)=2 \\
 \frac{\eta}{2}(a + b - c) & \mbox{if } o(ab) = 3 \mbox{, where } c = a^b = b^a
\end{cases}
\]
(Clearly here the field should not be of characteristic two.)  By 
\cite[Theorem 1.5]{jordan}, all Matsuo algebras with $\eta \neq 0,1$ are examples of axial 
algebras of Jordan type $\eta$. It can be seen that the projection Frobenius form for $A$ 
is given by
\[
(a,b) = \begin{cases} 1 & \mbox{if } b = a\\
0 & \mbox{if } o(ab)=2 \\
 \frac{\eta}{2} & \mbox{if } o(ab) = 3
\end{cases}
\]
Using the basis $D$, the form has Gram matrix
\[
F = I + \tfrac{\eta}{2} M
\]
where $I$ is the identity matrix and $M$ is the adjacency matrix of the non-commuting 
graph on $D$.  The form has a radical precisely when $F$ is not of full rank.  From the 
above equation for $F$, we see this occurs if and only if $\eta = -\frac{2}{\lambda}$ 
for a non-zero eigenvalue $\lambda$ of $M$. Furthermore, the radical of the form 
coincides with the $\lambda$-eigenspace of $F$. For example, the valency $\kappa$ of 
the non-commuting graph is an eigenvalue of $M$ and the corresponding eigenspace is 
$1$-dimensional spanned by the all-one vector. Hence, when $\eta=-\frac{2}{\kappa}$, 
the projection form has $1$-dimensional radical.

In general, since $M$ has finitely many eigenvalues, there are only finitely many values 
of $\eta$, for which the Frobenius form on the Matsuo algebra has a non-zero 
radical.

Here is a complete example.

\begin{example}\label{ex:Matsuo}
The group $G = S_5$ with the conjugacy class $D$ of transpositions is a $3$-transposition 
group and so leads to a Matsuo algebra $A$. By calculation (for example, see 
\cite{Fischer}), $M$ has eigenvalues $6, 1, -2$.  The value $\lambda=-2$ leads to 
$\eta = 1$, so this may be discarded.  When $\lambda = \kappa = 6$, $\eta=-\frac{1}{3}$ 
and the radical is spanned by the element $r:= \sum_{a \in D} a$.  When $\lambda = 1$, 
$\eta = -2$ and we have a $4$-dimensional radical spanned by elements of the form
\[
(i,j) + (i,k) + (i,l) - (m, j) - (m, k) - (m, l)
\]
where $\{i,j,k,l,m\} = \{1,2,3,4,5\}$.
\end{example}

%%%%%%%%%%%

\subsection{Ideals and the projection graph}

Having considered ideals which do not contain any axes, we now turn our attention to 
ideals $I$ that do contain an axis $a$.  What other axes does $I$ contain?  Suppose $b$ 
is another axis of $A$ which is primitive and let $a$ have decomposition 
$a = \bigoplus_{\lambda \in \mathcal{F}} a_\lambda$, where $a_\lambda \in A_\lambda(b)$.  
Since $I$ is invariant under $\ad_b$, we have $I=\oplus_{\lm\in\cF}I_\lm(b)$ and hence 
$a_\lm\in I$ for each $\lm\in\cF$. In particular, the projection $a_1$ is in $I$. Since 
$b$ is primitive, $a_1=\phi_b(a)b$ is a scalar multiple of $b$.  Hence, if $a_1 \neq 0$ 
then $b \in I$. This motivates the following construction.

\begin{definition}
Let $A$ be a primitive axial algebra.  We define the \emph{projection graph} $\Gamma$ 
to be the directed graph with vertex set $X$ and a directed edge from $a$ to $b$ if the 
projection $a_1$ of $a$ onto $b$ is non-zero. That is, if $\phi_b(a)\neq 0$.
\end{definition}

Given a directed graph $\Gamma$, the \emph{out set} $Out(\Gamma, Y)$ of a subset of 
vertices $Y$ is the set of all the vertices $v$ reachable from $Y$ by a directed path 
from $x \in Y$ to $v$.

The following lemma follows from the discussion above.

\begin{lemma}\label{orbitprojgraph}
Let $A$ be a primitive axial algebra and $\Gamma$ be its projection graph. If $Y$ is a 
set of axes contained in an ideal $I$ then $Out(\Gamma, Y)$ is also fully contained in 
$I$.
\end{lemma}

Recall that a directed graph $\Gamma$ is strongly connected if every vertex is reachable 
by a directed path from any other.  

\begin{corollary}\label{projstronglycon}
Let $A$ be a primitive axial algebra with a strongly connected projection graph. Then 
every proper ideal of $A$ is contained in the radical.
\end{corollary}

Recall from Corollary \ref{idealGinvariant} that every ideal is invariant under the 
Miyamoto group $G$. Hence, as a further improvement, we may quotient out by the action 
of $G$ to form the quotient graph $\bar \Gamma := \Gamma/G$.  It has as vertices orbits 
of axes with a directed edge from $a^G$ to $b^G$ if there exist axes $a' \in a^G$ and 
$b' \in b^G$ such that the projection $a'_1$ of $a'$ onto $b'$ is non-zero.  We call 
$\bar \Gamma$ the \emph{orbit projection graph}.

\begin{corollary}\label{orbprojstronglycon}
Let $A$ be a primitive axial algebra with a strongly connected orbit projection graph. 
Then every proper ideal of $A$ is contained in the radical.
\end{corollary}

We now consider the properties of the projection graph $\Gamma$ when there is a 
Frobenius form.

\begin{lemma}\label{frobprojlem}
Let $A$ be a primitive axial algebra that admits a Frobenius form.  Suppose that 
$(a,a) \neq 0 \neq (b,b)$ for $a,b \in X$.  The following are equivalent:
\begin{enumerate}
\item[$1.$] There is a directed edge $a \rightarrow b$ in $\Gamma$.
\item[$2.$] There is a directed edge $a \leftarrow b$ in $\Gamma$.
\item[$3.$] $(a,b) \neq 0$.
\end{enumerate}
\end{lemma}

\begin{proof}
By Lemma \ref{frobprim}, $(a,b) = \phi_a(b) (a,a)$, where the projection 
$b_1 = \phi_a(b) a$. Since the form is symmetric, the result follows.
\end{proof}

In light of the above result, when $A$ is a primitive axial algebra that admits a 
Frobenius form that is non-zero on the axes, we may consider $\Gamma$ to be an 
undirected graph.

%%%%%%%%%%%%%%%%%%%%%%%%%%%%

\subsection{Uniqueness of the Frobenius form}

The same concept of the projection graph is useful when we want to establish uniqueness 
of the Frobenius form.

\begin{lemma}\label{uniquefrob}
A Frobenius form on a primitive axial algebra $A$ is uniquely determined by its values 
$(a,a)$ for $a \in X$.
\end{lemma}

\begin{proof}
If two Frobenius forms have the same values of $(a,a)$ for all $a\in X$ then their 
difference (which also associates with the algebra product) satisfies $(a,a)=0$ for all 
$a\in X$. Hence it suffices to show that the latter condition forces the form to be zero.

Clearly, $A$ is spanned by products of axes and so we just need to show that $(u,v)=0$ 
for all $u$ and $v$ that are products of axes. We use induction on the length of the 
products of axes for $v$.  If $v$ has length one, it is itself an 
axis in $X$. By Lemma \ref{frobprim}, $(u,v)=\phi_v(u)(v,v)=0$. Suppose now that $v$ has 
length at least two, which means that we may write $v=v_1v_2$, where $v_1$ and $v_2$ are 
shorter products. Then $(u,v)=(u,v_1v_2)=(uv_1,v_2)$. By induction, the latter value is 
zero. 
\end{proof}

In particular, for the form to be non-zero, at least one value $(a,a)$ must be non-zero. 
Clearly, we can scale the form so that $(a,a)$ takes any non-zero value we like, say 
$(a,a)=1$. By Lemma \ref{frobprim}, $\phi_a(b)(a,a)=(b,a)=(a,b)=\phi_b(a)(b,b)$. If 
$\phi_b(a)\neq 0$, we can deduce $(b,b)=\frac{\phi_a(b)}{\phi_b(a)}(a,a)$; that is, the 
value of $(b,b)$ can be determined from the value of $(a,a)$.

Recall that in the projection graph $\Gamma$ on $X$ we have a directed edge 
from $a$ to $b$ exactly when $\phi_b(a)\neq 0$. Hence the known values on a subset 
$Y\subset X$ allow us to deduce all values on the out set $Out(\Gamma,Y)$. In particular, 
we have the following.

\begin{proposition} \label{uniqueform}
If the projection graph $\Gamma$ of a primitive axial algebra $A$ is strongly 
connected then the Frobenius form on $A$, if it exists, is unique up to scaling.
\end{proposition}

The equation $(b,b)=\frac{\phi_a(b)}{\phi_b(a)}(a,a)$ means also that, for the 
Frobenius form to be a projection form (up to scaling), we must have 
$\frac{\phi_a(b)}{\phi_b(a)}=1$ for every edge of $\Gamma$; that is, 
$\phi_a(b)=\phi_b(a)$. Note that this condition may not be satisfied. For example, 
recent work Joshi on double axes in Matsuo algebras \cite{vijay} unearthed examples of 
axial algebras with fusion law $\cM(2\eta,\eta)$, where the unique Frobenius form is not 
a projection form.

Let us now discuss when the Frobenius form on $A$ is invariant under the Miyamoto group 
$G(X)$. Clearly, this requires that $(a^g,a^g)=(a,a)$ for all $a\in X$. It turns out this 
condition is also sufficient.

\begin{proposition}\label{invariantfrob}
The Frobenius form $(\cdot,\cdot)$ is invariant under the action of $G(X)$ if and only if 
$(a^g,a^g)=(a,a)$ for all $a\in X$ and $g\in G(X)$.
\end{proposition}

\begin{proof}
We have already mentioned that if the form is $G(X)$-invariant then $(a^g,a^g)=(a,a)$ for 
all $a\in X$ and $g\in G(X)$. Conversely, suppose that $(a^g,a^g)=(a,a)$ for all $a\in X$ 
and $g\in G(X)$. Fixing $g$, define a second form $(\cdot,\cdot)'$ by 
$(u,v)':=(u^g,v^g)$. It is straightforward to check that $(\cdot,\cdot)'$ is bilinear and, 
furthermore, Frobenius. Since $(a,a)'=(a^g,a^g)=(a,a)$ for each $a\in X$, we deduce from 
Proposition \ref{uniquefrob} that $(u,v)'=(u,v)$ for all $u,v\in A$. That is, 
$(u^g,v^g)=(u,v)$, proving that the form is $G(X)$-invariant.  
\end{proof}

Finally, if we are only interested in $G(X)$-invariant Frobenius forms then the uniqueness 
of such form can be checked via the orbit projection graph. 

\begin{proposition}
Let $A$ be a primitive axial algebra with a strongly connected orbit projection graph. 
Then a $G(X)$-invariant Frobenius form on $A$, if it exists, is unique up to scaling.
\end{proposition}

%%%%%%%%%%%%%

\subsection{An application: Norton-Sakuma algebras}

The $2$-generated primitive axial algebras of Monster type with a Frobenius form are 
well-known and have been completely classified.  There are nine such algebras, known as 
\emph{Norton-Sakuma} algebras \cite{hrs}. They all arise in the Griess-Norton algebra and 
their isomorphism type can be determined by the conjugacy class of $\tau_a\tau_b$, where 
$a$ and $b$ are two axes which generate the algebra. For this reason, they are usually 
labelled $1\A$, $2\A$, $2\B$, $3\A$, $3\C$, $4\A$, $4\B$, $5\A$ and $6\A$.  For a full 
description of these see, for example, \cite{ivanov}. We note that the known Frobenius 
forms on Norton-Sakuma algebras are inherited from the Griess-Norton algebra and, as such, 
they are positive definite and invariant under the respective Miyamoto groups.

\begin{proposition}
All the Norton-Sakuma algebras, except $2\B$, are simple and have unique Frobenius form (up to scaling).
\end{proposition}

\begin{proof}
It follows from the table on page 213 in \cite{ivanov} that $\phi_a(b)=0$ if and only if $a$ and 
$b$ generate a $2\B$ algebra; that is, $ab=0$. In particular, for the algebras $2\A$, 
$3\A$, $3\C$, $4\B$, $5\A$, and $6\A$ the projection graph is a complete (unoriented) 
graph. For the algebra $4\A$, the projection graph is the complete graph $K_4$ minus 
a matching; that is, a $4$-cycle. Hence for all these algebras the projection graph 
is connected (and hence strongly connected). 

On the one hand, Corollary \ref{projstronglycon} now implies that every proper ideal is 
contained in the radical, and the latter is trivial by Corollary \ref{Majalg}. Hence 
the algebra is simple.

On the other hand, Proposition \ref{uniqueform} tells us that the Frobenius form is unique 
up to scaling.
\end{proof}

Note that $2\B\cong\mathbb{R}\oplus\mathbb{R}$ and so it is not simple and, furthermore, 
the values $(a,a)$ and $(b,b)$ for the primitive axes in this algebra can be chosen 
arbitrarily. (And so the Frobenius form is definitely not unique up to scaling.)

%%%%%%%%%%%%%%%%%%%%%%%%%%%%%%%%%

\section{Sum decompositions}\label{sec:decomp}

If our definition of radical is good then we can expect that axial algebras with a 
trivial radical are semisimple, that is, direct sums of simple axial algebras. 
Hence it is natural to discuss here (direct) sum decompositions of axial algebras.

\subsection{Sums of algebras}

Suppose $A$ is a commutative non-associative algebra and $A_1,\ldots,A_n$ are 
subalgebras of $A$. 

\begin{definition}
An algebra $A$ is a \emph{sum} of subalgebras $\{ A_i : i \in I \}$, for some countable index set $I$, if $A_iA_j=0$ 
for all $i\neq j$ and $A=\la\la A_i : i \in I \ra\ra$.
\end{definition}

First of all, let us note the following.

\begin{lemma}
If $A$ is a sum of subalgebras $\{ A_i : i \in I \}$, then $A$ is the sum of the $A_i$ 
viewed as subspaces of $A$.
\end{lemma}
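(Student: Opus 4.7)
The plan is to show the subspace sum $B := \sum_{i \in I} A_i$ is already a subalgebra of $A$; since by definition $A = \langle\langle A_i : i \in I\rangle\rangle$ is the smallest subalgebra containing all the $A_i$, this forces $A \subseteq B$, and the reverse inclusion is trivial.

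First I would note that $B \subseteq A$ is immediate, since each $A_i$ is a subspace of $A$. The real content is closure under multiplication. Take arbitrary $u, v \in B$ and write them as finite sums $u = \sum_{i \in J} u_i$ and $v = \sum_{j \in J} v_j$ with $u_i, v_j \in A_i, A_j$ respectively, where $J \subseteq I$ is a common finite subset (pad with zeros if needed). By bilinearity,
\[
uv = \sum_{i,j \in J} u_i v_j.
\]
For $i \neq j$, the hypothesis $A_i A_j = 0$ forces $u_i v_j = 0$. For $i = j$, since $A_i$ is a subalgebra, $u_i v_i \in A_i$. Hence $uv = \sum_{i \in J} u_i v_i \in \sum_{i \in I} A_i = B$, so $B$ is closed under the product.

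Since $B$ is a subalgebra containing every $A_i$, and $A = \langle\langle A_i : i \in I\rangle\rangle$ is by definition the smallest such subalgebra, we conclude $A \subseteq B$, giving $A = B = \sum_{i \in I} A_i$ as subspaces.

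There is no serious obstacle here; the only point requiring a small amount of care is handling the possibly infinite index set $I$, which is harmless because each individual element of $B$ is a finite linear combination, so only finitely many indices appear in the expansion of $uv$. Non-associativity likewise causes no trouble, because the argument only ever uses a single product $uv$ of two elements of $B$ and relies solely on bilinearity together with the componentwise vanishing $A_i A_j = 0$.
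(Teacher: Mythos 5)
Your proof is correct and follows essentially the same route as the paper: show the subspace sum is closed under multiplication (all cross terms $u_i v_j$ with $i \neq j$ vanish by hypothesis), hence is a subalgebra containing every $A_i$, and therefore equals $\la\la A_i : i \in I \ra\ra = A$. Your extra care about finiteness of the supports is a harmless refinement of the paper's argument, not a different approach.
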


\begin{proof}
We denote by $\sum_{i \in I} A_i$, the vector space sum of the $A_i$.  We must show that it is the whole of $A$.
Taking two elements $u= \sum_{i \in I} u_i$ and $v=\sum_{i \in I} v_i$ of the subspace 
$\sum_{i \in I} A_i$, we see that $uv=(\sum_{i \in I} u_i)(\sum_{j \in I} v_j)= \sum_{i \in I} u_iv_i$, 
since all other pairwise products are zero. Hence $\sum_{i \in I} A_i$ is closed 
with respect to multiplication and so it is a subalgebra. Since it also contains all $A_i$, we 
conclude that $\sum_{i \in I} A_i$ coincides with $\la\la A_i : i \in I \ra\ra=A$.
\end{proof}

In light of the above result, from now on, we will write $A = \sum_{i \in I} A_i$ when $A$ is a sum of subalgebras $\{ A_i : i \in I \}$.  In particular, every element $u\in A$ can be written as $u=\sum_{i \in I} u_i$, where 
$u_i\in A_i$ for all $i$, and multiplication is given by 
$uv=(\sum_{i \in I} u_i)(\sum_{i \in I} v_i)=\sum_{i \in I} u_iv_i$. As usual, if the 
decomposition $u=\sum_{i \in I} u_i$ is unique for each $u\in A$, we call $A$ the 
\emph{direct sum}  of the subalgebras $A_i$ and write $A=\bigoplus_{i \in I} A_i$. In this 
case, $A$ is isomorphic to the \emph{external direct sum} defined as the Cartesian 
product $A_1\times\ldots\times A_n$ taken with the entry-wise operations.

Recall the following standard definition.

\begin{definition}
The \emph{annihilator} of a commutative algebra $A$ is
\[
\Ann(A):=\{u\in A : uA=0\}.
\] 
\end{definition}

Manifestly, $\Ann(A)$ is an ideal.  Returning to the axial algebra case, recall that the radical is the largest ideal $R(A, X)$ of $A$ not containing any axes $x \in X$.

\begin{lemma}
For an axial algebra $A$, $\Ann(A)\subseteq R(A,X)$.
\end{lemma}
\begin{proof}
Clearly, $\Ann(A)$ does not contain any axes in $X$ as $a \cdot a = a \neq 0$ for $a \in X$.
\end{proof}

Note that the annihilator does not necessarily equal the radical of an axial algebra.

\begin{example}
Recall the Matsuo algebra for the group $S_5$ from Example \ref{ex:Matsuo}.  If $\eta = -\frac{1}{3}$, then the radical is spanned by
\[
\sum_{a \in D} a
\]
which is easy to check is also in the annihilator.  So for $\eta = -\frac{1}{2}$, $R(A, X) = \Ann(A)$.

However, for $\eta = -2$, the radical is $4$-dimensional and is spanned by elements of the form
\[
(i,j) + (i,k) + (i,l) - (m, j) - (m, k) - (m, l)
\]
where $\{i,j,k,l,m\} = \{1,2,3,4,5\}$.  However, a simple calculation shows that such a vector is not in the annihilator.  Furthermore, these vectors span an irreducible submodule, so this implies the annihilator must be trivial.  Hence, for $\eta=-2$, $0 = \Ann(A) \subsetneqq R(A, X)$.
\end{example}

\begin{proposition}\label{annihilator}
If $A=\sum_{i \in I} A_i$, then 
\begin{enumerate}
\item[$1.$] $A_i\cap(\sum_{j\neq i} A_j)\subseteq\Ann(A_i)$
\item[$2.$] $\Ann(A)=\sum_{i \in I}\Ann(A_i)$
\end{enumerate}
\end{proposition}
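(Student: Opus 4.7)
The plan is that both parts follow directly from the defining property $A_i A_j = 0$ for $i \neq j$ together with the preceding lemma, which guarantees that every element of $A$ can be written (not necessarily uniquely) as a finite sum $\sum_{i \in I} u_i$ with $u_i \in A_i$.

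For part~(1), I would take $u \in A_i \cap \sum_{j \neq i} A_j$ and write $u = \sum_{j \neq i} u_j$ with $u_j \in A_j$. For any $v \in A_i$, cross-multiplying gives $uv = \sum_{j \neq i} u_j v$, and each summand lies in $A_j A_i = 0$. Hence $uv = 0$ for every $v \in A_i$, which is exactly the statement $u \in \Ann(A_i)$.

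For part~(2), I would prove the two inclusions separately. For $\sum_{i \in I} \Ann(A_i) \subseteq \Ann(A)$, it suffices by linearity to show that each $u_i \in \Ann(A_i)$ lies in $\Ann(A)$; given any $v = \sum_{j} v_j \in A$, one computes $u_i v = \sum_j u_i v_j$, where the summands with $j \neq i$ vanish because $A_i A_j = 0$, and the term $u_i v_i$ vanishes because $u_i \in \Ann(A_i)$. For the reverse inclusion, take $u \in \Ann(A)$ and pick any decomposition $u = \sum_{i} u_i$ with $u_i \in A_i$. For a fixed $i$ and arbitrary $v \in A_i$, we have $0 = uv = \sum_j u_j v = u_i v$, once again since $A_j A_i = 0$ for $j \neq i$. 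Thus $u_i \in \Ann(A_i)$ for every $i$, and so $u \in \sum_{i \in I} \Ann(A_i)$.

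The only point that needs a moment of care is that the decomposition $u = \sum_i u_i$ is not assumed to be unique (the sum need not be direct), but the argument is robust to this: the conclusion $u_i v = uv$ for $v \in A_i$ holds for \emph{any} such choice of decomposition, so the component $u_i$ automatically annihilates $A_i$ regardless of which decomposition is used. I do not anticipate any serious obstacle here; the proof is essentially a direct unpacking of the definitions.
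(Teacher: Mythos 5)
Your proof is correct and follows essentially the same route as the paper: both parts are obtained by decomposing elements as (possibly non-unique) sums $\sum_i u_i$ and using $A_iA_j=0$ to kill all cross terms. The only cosmetic difference is in part (1), where the paper shows the slightly stronger statement $uA=0$ and then intersects with $A_i$, whereas you verify $uA_i=0$ directly; your remark about robustness to non-uniqueness of the decomposition matches the spirit of Lemma \ref{different} in the paper.
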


\begin{proof}
Suppose that $u\in A_i\cap(\sum_{j\neq i} A_j)$ and let $a \in A$.  We may decompose $a = \sum_{j \in I} a_j$.  We have $ua = \sum_{j \in I} ua_j = ua_i + \sum_{j\neq i} ua_j $.  Since $u \in \sum_{j\neq i} A_j$, we see that $ua_i = 0$.  On the other hand, $u$ is in $A_i$ and hence $\sum_{j\neq i} ua_j = 0$ too.  Therefore $uA = 0$ and $u \in \Ann(A) \cap A_i \subseteq \Ann(A_i)$.
	
Since $A_iA_j = 0$ for all $j \neq i$, $\Ann(A_i)\subseteq\Ann(A)$.  Hence \allowbreak $\sum_{i \in I}\Ann(A_i)\subseteq\Ann(A)$. Conversely, if $u=\sum_{j \in I} u_j\in\Ann(A)$ 
then, taking $v\in A_i$, we get that $0=uv=(\sum_{j \in I} u_j)v=\sum_{j \in I} u_jv=u_iv$. 
So $u_iv=0$ for all $v\in A_i$; that is, $u_i\in\Ann(A_i)$. Therefore, 
$\Ann(A)\subseteq\sum_{i \in I}\Ann(A_i)$ and so we have equality.
\end{proof}

Suppose we take two different decompositions of an element $u$ and consider how these can differ.

\begin{lemma}\label{different}
Suppose $A=\sum_{i \in I} A_i$ and $u\in A$. For any two decompositions 
$u=\sum_{i \in I} u_i=\sum_{i \in I} u'_i$ of $u$, the difference $d_i=u_i-u'_i$ lies 
in $\Ann(A_i)$ for each $i$.
\end{lemma}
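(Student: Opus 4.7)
The plan is to derive this as an essentially immediate consequence of Proposition \ref{annihilator}(1), since that result already provides the key containment $A_i \cap (\sum_{j \neq i} A_j) \subseteq \Ann(A_i)$. The whole argument comes down to showing that each $d_i$ lies in this intersection.

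First, I would subtract the two given decompositions: from $\sum_{i \in I} u_i = \sum_{i \in I} u'_i$ we get $\sum_{i \in I} d_i = 0$, where $d_i = u_i - u'_i$. By construction $d_i \in A_i$ for each $i$, since $A_i$ is a subspace. Next, isolating the $i$-th term, I would rewrite this relation as
\[
d_i = -\sum_{j \neq i} d_j,
\]
which exhibits $d_i$ as an element of the subspace sum $\sum_{j \neq i} A_j$. Combining the two memberships, $d_i \in A_i \cap \bigl(\sum_{j \neq i} A_j\bigr)$, which by Proposition \ref{annihilator}(1) is contained in $\Ann(A_i)$. This yields the conclusion.

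There is really no substantive obstacle here; the content has already been absorbed into Proposition \ref{annihilator}. The only thing worth a moment's care is ensuring that the manipulations make sense even when $I$ is infinite: by the definition of a sum of subalgebras, each element $u \in A$ is expressed as a (finite) sum over some finite subset of indices, so the equation $\sum d_i = 0$ is really a finite sum, and the rewriting $d_i = -\sum_{j \neq i} d_j$ is a finite sum as well. Thus no convergence issue arises, and the argument goes through for arbitrary countable index sets $I$ exactly as in the finite case.
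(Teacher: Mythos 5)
Your argument is correct and is essentially identical to the paper's proof: both isolate $d_i = -\sum_{j\neq i} d_j$ to place it in $A_i \cap \bigl(\sum_{j\neq i} A_j\bigr)$ and then invoke Proposition \ref{annihilator}(1). The extra remark about finiteness of the sums is harmless and does not change the argument.
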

\begin{proof}
Clearly, $d_i\in A_i$. On the other hand, $d_i=\sum_{j\neq i}(u'_j-u_j)=-\sum_{j\neq i} d_j\in\sum_{j\neq i} A_j$. 
So $d_i\in A_i\cap\sum_{j\neq i} A_j$.  By Proposition \ref{annihilator}, $d_i \in \Ann(A_i)$.
\end{proof}

In particular, the following is true.

\begin{corollary}
If $A=\sum_{i \in I} A_i$ and $\Ann(A)=0$ then $A=\bigoplus_{i \in I} A_i$.
\end{corollary}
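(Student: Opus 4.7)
The plan is to read off the result directly from the two preceding results, Proposition \ref{annihilator} and Lemma \ref{different}. The strategy: show that the hypothesis $\Ann(A)=0$ forces each $\Ann(A_i)=0$, then use this to conclude that any two decompositions of an element $u\in A$ must coincide component-wise.

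First, I would invoke Proposition \ref{annihilator}(2), which gives $\Ann(A)=\sum_{i\in I}\Ann(A_i)$. Since each $\Ann(A_i)$ is a subspace of $\Ann(A)$ (this is implicit in the proposition and its proof, as $A_iA_j=0$ for $j\neq i$ means killing $A_i$ already kills everything else), the assumption $\Ann(A)=0$ immediately yields $\Ann(A_i)=0$ for every $i\in I$.

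Second, I would apply Lemma \ref{different}. Given any element $u\in A$ and any two decompositions $u=\sum_{i\in I}u_i=\sum_{i\in I}u'_i$ with $u_i,u'_i\in A_i$, the lemma gives $u_i-u'_i\in\Ann(A_i)=0$, so $u_i=u'_i$ for each $i$. Thus every element of $A$ has a unique decomposition as a sum of elements of the $A_i$, which is precisely the definition of $A=\bigoplus_{i\in I}A_i$.

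There is essentially no obstacle here; the proof is a two-line deduction from the preceding results. The only minor subtlety is noting that each $\Ann(A_i)\subseteq\Ann(A)$ (used already in the proof of Proposition \ref{annihilator}), so that the vanishing of $\Ann(A)$ propagates down to the summands. Everything else is a formal application of Lemma \ref{different}.
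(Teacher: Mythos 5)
Your proof is correct and follows exactly the route the paper intends: the corollary is stated there without proof precisely because it is the immediate combination of Proposition \ref{annihilator}(2) (giving $\Ann(A_i)\subseteq\Ann(A)=0$) with Lemma \ref{different} (forcing uniqueness of decompositions). Nothing is missing.
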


Recall that, for an axial algebra $A$, $\Ann(A)\subseteq R(A,X)$ and hence the assumption that the annihilator is trivial is satisfied when the radical $R(A,X)$ is trivial.

\subsection{Idempotents} \label{idempotents}

Axial algebras are generated by idempotents. So let us take a look at 
idempotents in sums of algebras.

\begin{lemma}
Suppose $A=\sum_{i \in I} A_i$ and $a\in A$ is an idempotent. Then 
$a$ admits a decomposition $a=\sum_{i \in I} a_i$, where every $a_i\in A_i$ is 
an idempotent.
\end{lemma}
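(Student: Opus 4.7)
The plan is to start with any decomposition of $a$ coming from $A = \sum_{i \in I} A_i$, and then replace each component by its square, showing that this new set of components is still a decomposition of $a$ and that each component is now idempotent. The key tool will be Lemma \ref{different}, which tells us that two decompositions of the same element differ by annihilator elements.

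First, write $a = \sum_{i \in I} a_i$ with $a_i \in A_i$, using that $A$ is the sum of the $A_i$ as subspaces. Since $A_i A_j = 0$ for $i \neq j$, expanding $a^2$ gives $a^2 = \sum_{i \in I} a_i^2$. Because $a$ is idempotent, this yields a second decomposition $a = \sum_{i \in I} b_i$ with $b_i := a_i^2 \in A_i$. By Lemma \ref{different}, the difference $d_i := a_i - b_i$ lies in $\Ann(A_i)$ for every $i \in I$.

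Now I would verify that each $b_i$ is itself an idempotent. Since $d_i \in \Ann(A_i)$ and both $a_i$ and $b_i$ lie in $A_i$, we have $d_i \cdot a_i = 0$ and $d_i \cdot b_i = 0$. The first identity rearranges to $a_i^2 = b_i \cdot a_i$, that is, $b_i = a_i b_i$, while the second gives $a_i b_i = b_i^2$. Chaining these, $b_i = a_i b_i = b_i^2$, so $b_i$ is the desired idempotent in $A_i$, and $a = \sum_{i \in I} b_i$ is the required decomposition.

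There is no real obstacle here; the content is entirely in recognising that $a^2$ can be decomposed in two ways that must be compared via Lemma \ref{different}. The only subtlety to watch is that one should not hope the original $a_i$ themselves are idempotent — they need only differ from idempotents by annihilator elements — and that the replacement $a_i \rightsquigarrow a_i^2$ preserves the sum precisely because $A_i A_j = 0$ for $i \neq j$, which is exactly what makes $a^2 = \sum a_i^2$ hold term by term.
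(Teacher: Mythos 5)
Your proof is correct and follows essentially the same route as the paper: take an arbitrary decomposition, square its components to get a second decomposition of $a=a^2$, and invoke Lemma \ref{different} to see the difference lies in $\Ann(A_i)$, whence the squared components are idempotent. The only cosmetic difference is that you verify idempotence of $b_i=a_i^2$ via the two identities $d_ia_i=0$ and $d_ib_i=0$, whereas the paper expands $(a_i'+d_i)^2$ directly; these are the same computation.
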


\begin{proof}
Consider first an arbitrary decomposition $a=\sum_{i \in I} a'_i$ and set 
$a_i=(a'_i)^2$. Note that $a=a^2=(\sum_{i \in I} a'_i)^2=\sum_{i \in I} (a'_i)^2=
\sum_{i \in I} a_i$. So we have a decomposition. By Lemma \ref{different}, 
$d_i=a_i-a'_i\in\Ann(A_i)$. Therefore, $a_i^2=(a'_i+d_i)^2=(a'_i)^2+2a'_id_i+d_i^2=
(a'_i)^2=a_i$. Hence each $a_i$ is indeed an idempotent.
\end{proof}

Recall that we call an axis $a$ primitive when the $1$-eigenspace of $\ad_a$ 
coincides with $\la a\ra$. Similarly, we call a non-zero idempotent $a\in A$ 
\emph{primitive} when the $1$-eigenspace of $\ad_a$ is $1$-dimensional.

\begin{lemma} \label{only one}
Let $A=\sum_{i \in I} A_i$.  Than
\begin{enumerate}
\item[$1.$] Every idempotent of $A$ is contained in at most one $A_i$.
\item[$2.$] Every primitive idempotent is contained in exactly one $A_i$.
\end{enumerate}
\end{lemma}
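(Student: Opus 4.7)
The plan is to use Proposition \ref{annihilator} for part 1 and the previous idempotent decomposition lemma for part 2, then pivot on primitivity to pin down the component.

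For part 1, assume $a$ is a non-zero idempotent lying in $A_i \cap A_j$ for distinct $i,j$. Since $A_j \subseteq \sum_{k \neq i} A_k$, I have $a \in A_i \cap \sum_{k\neq i} A_k$, and Proposition \ref{annihilator}(1) gives $a \in \Ann(A_i)$. But $a \in A_i$ too, so $a \cdot a = 0$, contradicting $a = a^2 \neq 0$. (I will be careful to note that ``idempotent'' here is understood to mean non-zero, matching the paper's primitive-idempotent convention.)

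For part 2, let $a$ be a primitive idempotent. By the preceding lemma I can write $a = \sum_{i \in I} a_i$ with each $a_i \in A_i$ an idempotent (possibly zero). Since $A_iA_j = 0$ for $i \neq j$, the $a_i$ are pairwise orthogonal, and therefore
\[
a \cdot a_j = \Bigl(\sum_{i\in I} a_i\Bigr) a_j = a_j^2 = a_j,
\]
so every non-zero $a_j$ is a $1$-eigenvector of $\ad_a$. Primitivity forces $a_j = \lambda_j a$ for some scalar $\lambda_j$, and then idempotency gives $\lambda_j^2 a = \lambda_j a$, so $\lambda_j \in \{0,1\}$; in other words, each non-zero $a_j$ equals $a$. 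Now if two distinct indices $j_1, j_2$ satisfied $a_{j_1} = a_{j_2} = a$, then $a$ would lie in $A_{j_1} \cap A_{j_2}$, contradicting part 1. Hence at most one $a_j$ is non-zero, and since they sum to the non-zero $a$, exactly one is non-zero and equals $a$; so $a \in A_j$ for this unique $j$.

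The main obstacle is the middle step of part 2, specifically making sure that a priori several $a_j$ could be non-zero (and over a positive characteristic field, summing $k$ copies of $a$ to $a$ does not immediately force $k=1$). The clean way around this, as above, is to conclude uniqueness from part 1 rather than from a characteristic-sensitive counting argument. Everything else is a direct application of what has already been established.
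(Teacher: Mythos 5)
Your proof is correct and follows essentially the same route as the paper: part 1 via $A_i\cap A_j\subseteq\Ann(A_i)$ forcing $a=a^2=0$, and part 2 by decomposing $a$ into orthogonal idempotent components, observing each lies in the $1$-eigenspace of $\ad_a$, and using primitivity together with part 1 to kill all but one component. The only (cosmetic) difference is that you compare each $a_j$ to $a$ itself and pin down the scalar via idempotency, whereas the paper compares two non-zero components to each other; both correctly sidestep the characteristic-sensitive counting issue you flag.
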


\begin{proof}
First of all, note that a non-zero idempotent cannot lie in two summands. 
Indeed, if $a\in A_i$ and $a\in A_j$ with $i\neq j$ then 
$a\in A_i\cap A_j\subseteq\Ann(A_i)$. Hence $a=a^2=0$; a contradiction.

Write $a=\sum_{i \in I} a_i$, where every $a_i\in A_i$ is an idempotent. 
Note that $aa_i=(\sum_{j \in I} a_j)a_i=a_ia_i=a_i=1a_i$. Hence all $a_i$ are 
contained in the $1$-eigenspace of $\ad_a$. By primitivity, if two components, 
$a_i$ and $a_j$, are non-zero then $a_i=\lm a_j$ for some $\lm\in \mathbb{F}^\times$.  Then, $a_i \in A_i \cap A_j$ and so by the first part, $a_i = 0$, a contradiction.
\end{proof} 

\begin{theorem}\label{sumaxial}
Suppose that $A=\sum_{i \in I} A_i$ is a primitive axial algebra generated by a set of axes $X$. Let $X_i$ be the set of all those $a\in X$ that are contained 
in $A_i$ and let $B_i=\la\la X_i\ra\ra$. Then $A=\sum_{i \in I} B_i$. 
\end{theorem}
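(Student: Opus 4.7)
The plan is to assemble this directly from the lemmas already proved in the section on idempotents and the definition of sum of subalgebras. The key engine is Lemma \ref{only one}: since $A$ is a primitive axial algebra, every axis $a \in X$ is in particular a primitive idempotent of $A$, hence is contained in exactly one summand $A_i$. This immediately gives a disjoint partition $X = \sqcup_{i \in I} X_i$, so the sets $X_i$ partition the full generating set.

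Once the partition is in hand, I would verify the two clauses of the definition of a sum decomposition for the family $\{B_i : i \in I\}$. First, since $X_i \subseteq A_i$ and $A_i$ is a subalgebra, taking the subalgebra generated by $X_i$ yields $B_i = \langle\langle X_i \rangle\rangle \subseteq A_i$. Therefore, for $i \neq j$,
\[
B_i B_j \subseteq A_i A_j = 0,
\]
so the pairwise annihilation property holds. Second, since $X = \bigcup_{i \in I} X_i \subseteq \bigcup_{i \in I} B_i$ and $X$ generates $A$ as an algebra, we conclude
\[
A = \langle\langle X \rangle\rangle \subseteq \langle\langle B_i : i \in I \rangle\rangle \subseteq A,
\]
which gives equality. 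Combining these two facts with the definition, $A = \sum_{i \in I} B_i$, as required.

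There is no real obstacle here once Lemma \ref{only one} is available; the potential subtle point is just to emphasise that axes of a primitive axial algebra are by hypothesis primitive idempotents, so the lemma applies to \emph{every} $a \in X$ (not merely to those for which primitivity is verified case by case). Everything else is a one-line consequence of $B_i \subseteq A_i$ and the hypothesis $A_i A_j = 0$.
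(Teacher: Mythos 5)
Your proof is correct, and it follows the paper's skeleton for the first two steps: Lemma \ref{only one} gives the partition $X=\bigsqcup_{i\in I}X_i$ (and you are right that primitivity of the axes is exactly what makes that lemma apply to every $a\in X$), and $B_i\subseteq A_i$ gives $B_iB_j\subseteq A_iA_j=0$. Where you diverge is the final step. The paper proves the stronger statement that every (non-associative) product of axes lies in a \emph{single} $B_i$, by induction on the length of the product, using $B_iB_j=0$ to kill any mixed product; this shows directly that the $B_i$ span $A$. You instead observe that $X\subseteq\bigcup_{i\in I}B_i$, so $A=\la\la X\ra\ra\subseteq\la\la B_i : i\in I\ra\ra$, which verifies the generation clause of the definition of a sum with no induction at all. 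Since the paper's definition of ``sum'' only asks for $B_iB_j=0$ and generation, and its first lemma in Section \ref{sec:decomp} then upgrades generation to a vector-space sum automatically, your shortcut is legitimate and strictly shorter; what the paper's induction buys is an explicit, self-contained proof of the spanning statement (the parenthetical ``in fact, span'') without routing through that lemma.
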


\begin{proof}
By Lemma \ref{only one}, every axis from $X$ lies in one and only one set $X_i$; that is, the 
sets $X_i$ form a partition of $X$.

Clearly, for $i\neq j$, we have $B_iB_j\subseteq A_iA_j=0$. So we just need to 
show that the subalgebras $B_i$ generate $A$. Since $X$ generates $A$, the algebra 
is spanned by all products of axes. Hence it suffices to show that each product 
is contained in some $B_i$. Clearly, if all axes involved in a product are from 
the same part $X_i$ then the product lies in $B_i$. Hence we just need to consider 
the case where the product $w$ involves axes from two different parts $X_i$ and $X_j$. 
In this case we will show that the product is zero by induction 
on the length of the product. Clearly the length of $w$ is at least two, 
and so we have $w=w_1w_2$, where $w_1$ and $w_2$ are shorter products. If, say, 
$w_1$ involves axes from two different parts then $w_1=0$ by induction and so $w=0$. 
Hence we can assume that $w_1$ only contains axes from one part, say $X_i$. Similarly, 
we can assume that $w_2$ only contains axes from $X_j$. However, this means that 
$w_1\in B_i$ and $w_2\in B_j$, and so $w=w_1w_2\in B_iB_j=0$. So indeed every product 
lies in some summand $B_i$ and so the subalgebras $B_i$ generate (in fact, span) $A$.
\end{proof}

This means that if an axial algebra decomposes as a sum, it also decomposes as a sum 
of smaller axial algebras. Furthermore, the summands come from partitions 
of the generating set $X$ satisfying $X_iX_j=0$ for all $i\neq j$.

\begin{theorem}
Suppose that $A$ is a primitive axial algebra such that $A = \la \la X \ra \ra = \la \la Y \ra \ra$ for two different generating sets of axes $X$ and $Y$ and $A$ has a decomposition $A=\sum_{i \in I} A_i$.  Let $X_i = A_i \cap X$ and $Y_i = A_i \cap Y$ and define $B_i = \la \la X_i \ra\ra$ and $C_i = \la\la Y_i \ra\ra$ as the axial algebras generated by the $X_i$ and $Y_i$ respectively.  Then, $B_i = C_i$ for all $i \in I$.
\end{theorem}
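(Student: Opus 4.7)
The plan is to prove $B_i = C_i$ by showing the inclusion $C_i \subseteq B_i$, with the reverse following by a symmetric argument. By Theorem \ref{sumaxial}, both $A = \sum_j B_j$ and $A = \sum_j C_j$ are honest sum decompositions of $A$ (not just of the axes). My strategy is to show that every generator $y \in Y_i$ of $C_i$ already lies in $B_i$; since $B_i$ is a subalgebra, this will immediately force $C_i = \la\la Y_i \ra\ra \subseteq B_i$.

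Here are the steps I would take in order. Fix $y \in Y_i$, so $y$ is a primitive axis (from the generating set $Y$) lying in $A_i$. Applying Lemma \ref{only one} to the sum decomposition $A = \sum_j B_j$, the primitive idempotent $y$ lies in a unique summand $B_{j_0}$. Next, observe that $X_{j_0} \subseteq A_{j_0}$ and $A_{j_0}$ is a subalgebra of $A$, so $B_{j_0} = \la\la X_{j_0} \ra\ra \subseteq A_{j_0}$; in particular $y \in A_i \cap A_{j_0}$. If $j_0 \neq i$, then $y \in A_i \cap \sum_{k \neq i} A_k$, which by Proposition \ref{annihilator}(1) is contained in $\Ann(A_i)$. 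But $y = y^2$, so $y$ being in an annihilator would force $y = 0$, contradicting that $y$ is an axis. Hence $j_0 = i$ and $y \in B_i$, so $Y_i \subseteq B_i$, yielding $C_i \subseteq B_i$. Swapping the roles of $X$ and $Y$ gives the reverse inclusion and hence equality.

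The main point, and the real content of the argument, is identifying which summand $B_{j_0}$ a given $y \in Y_i$ must belong to. This is a rigidity observation: a primitive idempotent sitting both in $A_i$ (by construction of $Y_i$) and in $A_{j_0}$ (because $B_{j_0} \subseteq A_{j_0}$) must have $i = j_0$, because distinct summands in a sum decomposition meet inside the annihilator, which cannot contain a nonzero idempotent. No new technology beyond Section \ref{sec:decomp} is needed; the whole argument is a clean exercise in juggling the three sum decompositions $A = \sum_i A_i = \sum_i B_i = \sum_i C_i$ through Lemma \ref{only one} and Proposition \ref{annihilator}.
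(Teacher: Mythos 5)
Your proposal is correct and follows essentially the same route as the paper: invoke Theorem \ref{sumaxial} to get the decomposition $A=\sum_i B_i$, use Lemma \ref{only one} to locate each $y\in Y_i$ in a unique summand $B_{j_0}\subseteq A_{j_0}$, and force $j_0=i$, giving $Y_i\subseteq B_i$ and hence $C_i\subseteq B_i$, with equality by symmetry. The only cosmetic difference is that you re-derive the uniqueness of the $A_i$ containing $y$ directly from Proposition \ref{annihilator}(1) (a nonzero idempotent cannot lie in the annihilator), whereas the paper simply cites Lemma \ref{only one} again for that step.
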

\begin{proof}
By Theorem \ref{sumaxial}, $A$ has a decomposition $A=\sum_{i \in I} B_i$ and another decomposition $A=\sum_{i \in I} A_i$.  By Lemma \ref{only one}, each axis $y \in Y$ is contained in a unique $A_i$ and a unique $B_i$.  However, since $A_i$ is a subalgebra, $B_i = \la\la X_i \ra\ra \leq A_i$ for all $i \in I$.  So, for each each $y \in Y$ there exists a unique $i \in I$ such that $y \in B_i \leq A_i$.  Hence, $C_i = \la\la Y_i \ra\ra \leq B_i$ and by symmetry the result follows.
\end{proof}

\begin{corollary}
The decomposition of a primitive axial algebra into a sum of axial subalgebras is stable under 
arbitrary change of axes.
\end{corollary}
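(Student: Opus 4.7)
The plan is to deduce the corollary directly from the preceding theorem together with Theorem \ref{sumaxial}. Suppose $A = \sum_{i \in I} A_i$ is a sum decomposition into axial subalgebras relative to a generating set $X$ of axes, so that each $A_i = \la\la X_i \ra\ra$ with $X_i = A_i \cap X$ (this is exactly the form produced by Theorem \ref{sumaxial}). Let $Y$ be an arbitrary alternative generating set of axes, not assumed equivalent to $X$. Setting $Y_i = A_i \cap Y$ and $C_i = \la\la Y_i \ra\ra$, Theorem \ref{sumaxial} applied to the decomposition $A = \sum_{i \in I} A_i$ and the generating axes $Y$ yields another axial sum decomposition $A = \sum_{i \in I} C_i$.

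At this point the preceding theorem applies verbatim, with $B_i$ (in that theorem's notation) equal to $A_i$ and with $C_i$ as above, yielding $A_i = C_i$ for every $i \in I$. In other words, the same collection of axial subalgebras arises whether we start from $X$ or from $Y$, which is precisely the statement that the axial sum decomposition is invariant under arbitrary change of generating axes.

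The main thing to check carefully is the bookkeeping that identifies the original summands $A_i$ with the $B_i = \la\la A_i \cap X \ra\ra$ of the previous theorem; this uses Lemma \ref{only one}, which guarantees that each axis in $X$ lies in exactly one $A_i$, so that the sets $X_i = A_i \cap X$ partition $X$ and each axial summand is indeed generated by its intersection with $X$. Beyond this, there is essentially no obstacle: the substantive content already lives in Theorem \ref{sumaxial} (producing an axial refinement from any sum decomposition) and in the preceding theorem (showing independence of the generating axes). I would therefore present the corollary as a one- or two-line deduction, emphasising that the notion of stability here is strictly stronger than that introduced in Section \ref{sec:auto}: the axes $X$ and $Y$ need not be equivalent, yet they yield the same axial decomposition.
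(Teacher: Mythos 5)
Your proposal is correct and is essentially the deduction the paper intends: the corollary is stated without proof precisely because it is the immediate consequence of the preceding theorem (applied with $B_i = A_i$ when the given decomposition is already the axial one refined from $X$), together with Theorem \ref{sumaxial} and Lemma \ref{only one} for the bookkeeping. Your observation that this stability is stronger than the equivalence-based notion of Section \ref{sec:auto} matches the paper's own emphasis.
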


\section{The non-annihilating graph $\Dl(X)$}\label{sec:NAgraph}

We can view the results above in a graph-theoretic way.

\begin{definition}
The \emph{non-annihilating graph} $\Dl(X)$ has vertex set $X$ and an edge $a \sim b$ between $a \neq b$ if $ab\neq 0$.
\end{definition}

Such a graph was introduced for axial algebras of Jordan type in \cite{HSS}.  In the case of Matsuo algebras, $\Delta(X)$ is also the non-commuting graph of the transpositions $X$.  For axial algebras of Monster type which admit a Frobenius form which is non-zero on each axis (this is all known examples), the non-annihilating graph is the same as the projection graph introduced in Section \ref{sec:radform}.

Suppose that $A = \sum_{i \in I} A_i$.  Then by Theorem \ref{sumaxial}, we may partition $X$ into a union of $X_i$ and $A = \sum_{i \in I} B_i$, where each $B_i$ is an axial algebra generated by $X_i$.  In particular, if $a\in X_i$ and $b\in X_j$, $i\neq j$, then $ab\in B_iB_j=0$. This means that each $X_i$ 
is a union of connected components of $\Dl(X)$.  It seems natural to ask: is it not true that the finest sum decomposition of $A$ arises when each $X_i$ is just a single 
connected component of $\Dl$?  For the Monster fusion law, we conjecture that this is indeed the case:

\begin{conjecture} \label{conj}
The finest sum decomposition of an axial algebra $A$ of Monster type arises when each $X_i$ is just a single connected component of $\Dl$.
\end{conjecture}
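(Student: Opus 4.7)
The plan is to extend the partial theorem already in the paper (which handles Seress fusion laws under a full-bodied hypothesis) by removing the full-bodied assumption in the Monster-type setting. Write $X_i$, $i \in I$, for the connected components of $\Dl(X)$ and set $B_i = \la\la X_i \ra\ra$. Since the union of the $X_i$ is $X$ and each $B_i$ is an axial subalgebra, once we know $B_i B_j = 0$ for $i \neq j$ the sum decomposition $A = \sum_{i \in I} B_i$ follows. So the content of the conjecture reduces to establishing the cross-annihilation $B_i B_j = 0$ for $i \neq j$.

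As a first step, observe that the Monster fusion law $\cM(\frac{1}{4}, \frac{1}{32})$ is Seress: $0 \in \cF$ and $0 \star \lm \subseteq \{\lm\}$ for every $\lm \in \cF$, so $A_0(a)$ is a subalgebra for every axis $a$. For $a \in X_i$ and $b \in X_j$ with $i \neq j$, the definition of $\Dl$ gives $ab = 0$, hence $X_j \subseteq A_0(a)$, and the subalgebra property promotes this to $B_j \subseteq A_0(a)$. Thus every axis in $X_i$ annihilates the whole of $B_j$, establishing the base case of cross-annihilation.

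The main difficulty is lifting $X_i \cdot B_j = 0$ to $B_i \cdot B_j = 0$ in the absence of associativity, and this is precisely where the partial result uses full-bodiedness. I would pursue two complementary strategies. The first is to show that, in the Monster-type setting, every such $B_i$ is automatically full-bodied, which would reduce the conjecture directly to the partial result. This could be anchored on Sakuma's theorem: every $2$-generated primitive axial algebra of Monster-type with a Frobenius form is one of the nine Norton-Sakuma algebras, each of which is at most $3$-closed and hence full-bodied. One would then attempt to bootstrap full-bodiedness of $B_i$ from the $2$-generated subalgebras spanned by the pairs in $X_i$, possibly using the Miyamoto group to transport right-nested products between pairs.

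The second, complementary, strategy is to strengthen the partial result itself by exploiting the $C_2$-grading. Because $B_j \subseteq A_0(a) \subseteq A_+(a)$ for every $a \in X_i$, the involution $\tau_a$ fixes $B_j$ pointwise, so $G(X_i)$ centralises $B_j$ elementwise; combined with the fact that $B_i$ is $G(X_i)$-invariant by Lemma \ref{invariant}, the product $B_i B_j$ acquires a very restricted equivariant structure that one should be able to force to be zero through a careful eigenspace analysis, using the Monster fusion rules on the pairs $(0,\lm)$ and $(\lm,0)$. The hardest part of either strategy is exactly this associativity-free propagation step: without some substitute for the full-bodied assumption there is no direct route from annihilation on the generators $X_i$ to annihilation on the whole of $B_i$, and engineering such a substitute in the Monster-type setting is the principal obstacle to proving the conjecture.
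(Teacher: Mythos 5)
This statement is a \emph{conjecture}: the paper itself offers no proof of it, and explicitly says ``we cannot prove this claim in full generality,'' supplying only the partial result (Theorem \ref{splitoff} and Corollary \ref{bodysum}) under the hypothesis that all but one of the component subalgebras are full-bodied. So there is no proof in the paper to compare yours against, and your proposal does not supply one either: it correctly reduces the problem to showing $B_iB_j=0$, correctly observes that the Monster fusion law is Seress so that $X_i\cdot B_j=0$, and then stops at exactly the same obstacle the paper identifies, namely propagating annihilation from the generators $X_i$ to all of $B_i$ without associativity. Both of your ``strategies'' end in an unproven assertion (``one would then attempt to bootstrap\dots'', ``one should be able to force\dots''), so there is a genuine gap --- indeed the entire content of the conjecture.

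Moreover, your first strategy is undermined by the paper's own example: there is a $4$-closed but not $3$-closed primitive axial algebra of Monster type with Miyamoto group $S_3\times S_3$ in which $\dim Q(A,X)=17<18=\dim A$, so Monster-type algebras generated by a single connected component of $\Dl(X)$ need \emph{not} be full-bodied. Hence ``every $B_i$ is automatically full-bodied'' is false in general, and the conjecture cannot be reduced to Corollary \ref{bodysum} this way. Your appeal to the Norton--Sakuma classification only controls $2$-generated subalgebras; it gives no control over right-nested products of length $\geq 4$ in three or more axes, which is precisely where full-bodiedness fails. The second strategy is also weaker than you suggest: $\tau_a$ fixing $B_j$ pointwise for all $a\in X_i$ already follows from Lemma \ref{Tacomm} and is used in the paper's central-product theorem for the Miyamoto groups, but fixing $B_j$ pointwise under $G(X_i)$ does not constrain the algebra product $B_iB_j$ beyond what the Seress argument of Theorem \ref{splitoff} already extracts. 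You should present this as a discussion of the conjecture and its obstacles, not as a proof.
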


Equivalently, set $X_i$ to be the $i$th connected 
component of $\Dl(X)$. Then certainly $ab=0$ for $a\in X_i$, $b\in X_j$, whenever 
$i\neq j$. Define $A_i=\la\la X_i\ra\ra$. The above conjecture means that $A$ decomposes as a sum of the $A_i$.   The argument as in Theorem \ref{sumaxial} above 
shows that the $A_i$ generate $A$. What is missing is the claim that $A_iA_j=0$ for 
$i\neq j$. 

For axial algebras of Jordan type $\eta$ (those with fusion law $\cJ(\eta)$), the above conjecture holds and is Theorem A in \cite{HSS}.  We note that axial algebras of Jordan type are $1$-closed and their fusion law is Seress.

While we do not have any examples to the contrary, we cannot prove Conjecture \ref{conj} in full 
generality. We give a partial result, but before that we show that the groups behave well with respect to the finest sum decomposition.

Before we do so, we make an observation.  So far we have completely ignored the fusion law $\cF$ for $A$.  However, if $\Delta(X)$ does have more than one component, then in particular there exists two axes $a,b \in X$ such that $ab=0$.  So at the very least we must have that $0 \in \cF$.

\subsection{Miyamoto group}

Suppose that our axial algebra $A$ is $T$-graded, so that it has a Miyamoto group $G$. Recall 
that $\cF_t$ denotes the part of the grading partition corresponding to $t\in T$. For example, 
$\cF_{1_T}$ is the part corresponding to the identity element $1_T\in T$. We always have that 
$1\in\cF_{1_T}$.

\begin{lemma}\label{Tacomm}
Let $a,b\in X$ such that $ab=0$. Then $0\in\cF_{1_T}$ and $[T_a,T_b]=1$.
\end{lemma}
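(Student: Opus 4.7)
The plan is to show that for every $\chi, \psi \in T^\ast$, the automorphism $\tau_a(\chi)$ fixes $b$, and then to use the conjugation identity $\tau_{b^g}(\psi) = \tau_b(\psi)^g$ (observed earlier in the paper) to conclude that $\tau_a(\chi)$ commutes with $\tau_b(\psi)$.

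First I would extract from the hypothesis $ab = 0$ that $b \in A_0(a)$, and then use the assumption $0 \in \cF_{1_T}$ to conclude $b \in A_{1_T}(a)$. Since $\tau_a(\chi)$ acts on $A_{1_T}(a)$ as the scalar $\chi(1_T) = 1$, this immediately gives $b^{\tau_a(\chi)} = b$. Symmetrically, one also has $a^{\tau_b(\psi)} = a$, though only one direction is strictly needed.

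Next I would invoke the identity $\tau_{a^g}(\chi) = \tau_a(\chi)^g$ used in the proof of Lemma \ref{barX}. Applied with the roles $a \mapsto b$ and $g = \tau_a(\chi)$, this yields
\[
\tau_b(\psi)^{\tau_a(\chi)} \;=\; \tau_{b^{\tau_a(\chi)}}(\psi) \;=\; \tau_b(\psi),
\]
so $\tau_a(\chi)$ and $\tau_b(\psi)$ commute. As $\chi$ and $\psi$ range over $T^\ast$, this gives $[T_a, T_b] = 1$.

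The proof is essentially a one-step calculation, so there is no real obstacle; the only subtle point to flag is the role of the assumption $0 \in \cF_{1_T}$. Without it, $b$ sits in $A_0(a)$ but not necessarily in the trivially-graded component, and $\tau_a(\chi)$ could move it. The hypothesis is exactly what is needed to push the annihilating product into the part of the grading where the axis subgroup acts trivially.
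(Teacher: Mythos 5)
Your proposal is correct and follows exactly the paper's argument: $ab=0$ places $b$ in $A_0(a)\subseteq A_{1_T}(a)$, so every $\tau_a(\chi)$ fixes $b$, and the identity $\tau_{b^g}(\psi)=\tau_b(\psi)^g$ then gives the commutation. No differences worth noting.
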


\begin{proof}
Suppose $0\in\cF_t$ for some $t\in T$. Then $0\ast 0\subseteq\cF_{t^2}$. On the other hand, Note $b\in A_0(a)$ and $b^2=b$. This means that $t^2=t$, and so $t=1_T$. We have shown that $0\in\cF_{1_T}$. 

Now, since $0\in\mathcal{F}_{1_T}$, $b\in A_0(a)$ is fixed by $\tau_a(\chi)$ for all $\chi\in T^*$. 
Therefore, $\tau_b(\chi')^{\tau_a(\chi)} = \tau_{b^{\tau_a(\chi)}}(\chi') = \tau_b(\chi')$ and 
hence $[T_a,T_b]=1$.
\end{proof}

\begin{theorem}
Let $A$ be a $T$-graded axial algebra and the components of $\Delta(X)$ be $X_i$ for $i\in I$.  
Then, $G(X)$ is a central product of its subgroups $G(X_i)$.
\end{theorem}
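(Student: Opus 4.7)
The plan is to reduce the statement to the commutator property established in Lemma \ref{Tacomm}, applied across the components of $\Delta(X)$.

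First I would set $G_i := G(X_i) = \langle T_a : a \in X_i \rangle$ for each $i \in I$. Since the components $X_i$ partition $X$, we have
\[
G(X) = \langle T_a : a \in X \rangle = \langle G_i : i \in I \rangle,
\]
so the $G_i$ generate $G(X)$ as required. This is the easy half of the central product definition.

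Next, I would establish that $[G_i, G_j] = 1$ whenever $i \neq j$. If $a \in X_i$ and $b \in X_j$ with $i \neq j$, then $a$ and $b$ lie in different connected components of the non-annihilating graph $\Delta(X)$, so $ab = 0$ by the definition of $\Delta(X)$. The hypothesis $0 \in \mathcal{F}_{1_T}$ then puts us exactly in the setting of Lemma \ref{Tacomm}, giving $[T_a, T_b] = 1$. Since commutators distribute over products (for any groups $H, K$, if the generators of $H$ centralise the generators of $K$ then $[H, K] = 1$), this extends at once to $[G_i, G_j] = 1$.

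Finally, to close out the central product structure, I would verify that for each $i$ the intersection $G_i \cap \langle G_j : j \neq i\rangle$ lies in the centre of $G(X)$. Any element $g$ in this intersection commutes with $G_i$ (because $g$ lies in $\langle G_j : j \neq i\rangle$, which centralises $G_i$ by the pairwise commutation just proved) and simultaneously commutes with every $G_j$ for $j \neq i$ (because $g \in G_i$). Since $G(X)$ is generated by $G_i$ together with the $G_j$, we conclude $g \in Z(G(X))$, completing the verification.

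The only non-trivial input is Lemma \ref{Tacomm}, so I do not anticipate any real obstacle; the main thing to be careful about is simply the bookkeeping that pairwise commutation of the axis subgroups lifts to pairwise commutation of the subgroups they generate, and that this in turn forces the pairwise intersections (with the product of the others) to be central. No further use of the fusion law or the algebra structure beyond Lemma \ref{Tacomm} is needed.
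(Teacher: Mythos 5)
Your proof is correct and follows essentially the same route as the paper: reduce to Lemma \ref{Tacomm} via the observation that axes in distinct components of $\Delta(X)$ annihilate each other, then lift pairwise commutation of the axis subgroups to the subgroups $G(X_i)$. The extra verification that $G_i \cap \langle G_j : j \neq i\rangle$ is central is a harmless addition the paper leaves implicit.
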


\begin{proof}
The Miyamoto group $G(X_i)$ is generated by all $T_a$ with $a\in X_i$.  By Lemma \ref{Tacomm}, 
$[T_a,T_b]=1$ for all $a\in X_i$, $b\in X_j$, $i\neq j$. Hence, every element of $G(X_i)$ 
commutes with every element of $G(X_j)$.  Since $G(X) =\la G(X_i): i\in I\ra$, it is 
a central product of the $G(X_i)$.
\end{proof}

So under the mild assumption that $0$ is in the trivially graded part, the finest sum decomposition of the non-annihilating graph induces a central product of the corresponding Miyamoto groups.

\subsection{Quasi-ideals}

Before we consider the algebra decomposition, we first introduce a new concept.

\begin{definition}
Suppose $A$ is an axial algebra generated by a set $X$ of axes. A \emph{quasi-ideal} 
in $A$ with respect to the generating set $X$ is a subspace $I\subseteq A$ such that 
$aI\subseteq I$ for all $a\in X$.
\end{definition}

Clearly, every ideal is a quasi-ideal. However, the converse is not true as even though $A$ is generated by a set $X$ of axes, it is non-associative.

The above definition of a quasi-ideal $I$ depends on a particular set of generating axes. Suppose the 
fusion law $\cF$ is $T$-graded. Since $I$ is invariant under each $\ad_a$, $a\in X$, 
Lemma \ref{invariant} implies that $I$ is invariant under the action of each $T_a$, 
and hence it is invariant under $G(X)$. Therefore, for every $b=a^g\in\bar X$, 
we have that $bI=a^gI^g\subseteq(aI)^g=I^g=I$ and so $I$ is also a quasi-ideal 
with respect to the closure $\bar X$ of $X$. We have the following.

\begin{proposition}\label{quasiwelldefined}
Let $I \subseteq A$ and $X$ and $Y$ be two sets of axes.
\begin{enumerate}
\item[$1.$] If $I$ is a quasi-ideal with respect to $X$, then it is invariant under the action of $G(X)$.
\item[$2.$] If $X \sim Y$, then $I$ is a quasi-ideal with respect to $X$ if and only if it is a quasi-ideal with respect to $Y$.  That is, being a quasi-ideal is stable.
\end{enumerate}
\end{proposition}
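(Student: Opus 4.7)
The plan is to formalize the argument already sketched in the paragraph immediately preceding the proposition, treating the two parts in order.

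For part (1), I would argue as follows. By definition of a quasi-ideal, $aI \subseteq I$ for every $a \in X$; equivalently, $I$ is invariant under $\ad_a$ for every $a \in X$. Lemma \ref{invariant} then promotes this to invariance under the entire axis subgroup $T_a$, since any $\ad_a$-invariant subspace is invariant under all of $T_a$. Because $G(X)$ is by definition generated by the $T_a$ with $a \in X$, we conclude that $I$ is $G(X)$-invariant.

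For part (2), assume $X \sim Y$, so that $\bar X = \bar Y$. The strategy is to first show that a quasi-ideal with respect to $X$ is automatically a quasi-ideal with respect to the larger set $\bar X$, and then use $\bar X = \bar Y \supseteq Y$. Concretely, take any $b \in \bar X = X^{G(X)}$ and write $b = a^g$ with $a \in X$ and $g \in G(X)$. By part (1), $I^g = I$. Since $g$ is an algebra automorphism, $a^g u^g = (au)^g$ for every $u \in A$, so
\[
b I = a^g I = a^g I^g = (aI)^g \subseteq I^g = I.
\]
Thus $I$ is a quasi-ideal with respect to $\bar X$, and hence with respect to any subset of $\bar X$, in particular $Y \subseteq \bar Y = \bar X$. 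The reverse implication is obtained by interchanging the roles of $X$ and $Y$.

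There is no real obstacle here: all of the required machinery (Lemma \ref{invariant} for the passage $\ad_a \rightsquigarrow T_a$, and the formula $\bar X = X^{G(X)}$ from Lemma \ref{barX}) is already established. The only point where one must be mildly careful is verifying that the rewriting $a^g I = (aI)^g$ genuinely uses $g \in \Aut(A)$ and the $G(X)$-invariance of $I$ established in part (1); this is what allows the argument to bootstrap from $X$ to $\bar X$ and then out to any equivalent set $Y$.
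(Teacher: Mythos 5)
Your proof is correct and follows essentially the same route as the paper, whose own argument appears in the paragraph immediately preceding the proposition: Lemma \ref{invariant} yields part (1), and the computation $bI = a^gI^g \subseteq (aI)^g = I^g = I$ for $b = a^g \in \bar X$ bootstraps the quasi-ideal property from $X$ to $\bar X = \bar Y \supseteq Y$. No gaps.
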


So the concept of quasi-ideals behaves well with respect to natural changes of generators.  We now introduce an important example of a quasi-ideal.

\begin{definition}
The \emph{spine} of an axial algebra $A$ is the quasi-ideal $Q(A, X)$ generated by all axes $X$.  If $Q(A, X) = A$, then we say that $A$ is \emph{slender}.
\end{definition}

It is clear that the spine contains the axes $X$ and is spanned by all products of the form $x_1(x_2( \dots(x_{k-1} x_k) \dots)$ where $x_i \in X$.  In particular, we have the following easy lemma.

\begin{lemma}
If $A$ is a $3$-closed axial algebra, then $A$ is slender.
\end{lemma}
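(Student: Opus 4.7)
The plan is to unpack the definitions and observe that, because the algebra is commutative, every product of length at most $3$ can be rewritten in the right-nested form $x_1(x_2(\dots(x_{k-1}x_k)\dots))$ that spans the body $Q(A,X)$.

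First, I would set up the spanning set for $Q(A,X)$ explicitly: since $Q(A,X)$ contains $X$ and is closed under left multiplication by axes, an easy induction on $k$ shows that $Q(A,X)$ contains every element of the form $x_1(x_2(\dots(x_{k-1}x_k)\dots))$ with $x_i\in X$, and in fact these elements (together with the axes themselves) span $Q(A,X)$.

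Next, using the hypothesis that $A$ is $3$-closed, it suffices to show that every product of axes of length $\leq 3$ lies in $Q(A,X)$. Length $1$ and $2$ are immediate from the previous paragraph. For length $3$, there are (up to bracketing) two shapes: $x_1(x_2 x_3)$, which is already right-nested and so lies in $Q(A,X)$, and $(x_1 x_2) x_3$. Here I would invoke commutativity to rewrite $(x_1 x_2)x_3 = x_3(x_1 x_2)$, which is now of the right-nested form and hence also in $Q(A,X)$.

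Combining these gives that the spanning set of length-$\leq 3$ products is contained in $Q(A,X)$, so $A\subseteq Q(A,X)$, and therefore $A = Q(A,X)$, i.e.\ $A$ is full-bodied. There is no real obstacle here; the only small point to be careful about is correctly enumerating the bracketings of a length-$3$ product and using commutativity to reduce the single non-right-nested shape to the right-nested form.
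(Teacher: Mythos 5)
Your proof is correct and is exactly the intended argument: the paper states this as an "easy lemma" without proof, relying on the observation (made explicitly just before the lemma) that $Q(A,X)$ contains all right-nested products $x_1(x_2(\dots(x_{k-1}x_k)\dots))$, so that every length-$\leq 3$ product of axes — after using commutativity to rewrite $(x_1x_2)x_3$ as $x_3(x_1x_2)$ — lies in the body. Nothing is missing.
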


\begin{proposition}
Let $A$ be an axial algebra and $X \sim Y$ be two equivalent sets of axes.  Then, $Q(A, X) = Q(A, Y)$.  That is, the spine of an axial algebra is stable.
\end{proposition}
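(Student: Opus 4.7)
The plan is to reduce the equality $Q(A,X)=Q(A,Y)$ to a single containment and then invoke symmetry. Specifically, the natural strategy is to show that $Q(A,X)$ is itself a quasi-ideal with respect to $Y$ that contains $Y$; since $Q(A,Y)$ is by definition the smallest such subspace, this will force $Q(A,Y)\subseteq Q(A,X)$, and the reverse inclusion follows by interchanging the roles of $X$ and $Y$.

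To execute this I would verify two things. First, $Q(A,X)\supseteq Y$: the body $Q(A,X)$ contains $X$ by construction, and by Proposition \ref{quasiwelldefined}(1) it is $G(X)$-invariant, so it contains $X^{G(X)}=\bar X$ by Lemma \ref{barX}. Since $X\sim Y$ means $\bar X=\bar Y$, and $Y\subseteq\bar Y$, we conclude $Y\subseteq\bar X\subseteq Q(A,X)$. Second, $Q(A,X)$ is a quasi-ideal with respect to $Y$: this is immediate from Proposition \ref{quasiwelldefined}(2), applied with $I=Q(A,X)$, since being a quasi-ideal is stable under equivalence of generating sets.

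With these two facts in hand, the definition of the body gives $Q(A,Y)\subseteq Q(A,X)$, and swapping $X$ and $Y$ yields $Q(A,X)=Q(A,Y)$. I do not expect any genuine obstacle here: the result is essentially a bookkeeping consequence of the already-established stability of the quasi-ideal property (Proposition \ref{quasiwelldefined}) combined with the characterisation $\bar X=X^{G(X)}$ from Lemma \ref{barX}. The only point worth checking carefully is that one really has $Y\subseteq Q(A,X)$ before appealing to the minimality of $Q(A,Y)$, which is precisely where the $G(X)$-invariance of $Q(A,X)$, and hence the equivalence $\bar X=\bar Y$, plays its role.
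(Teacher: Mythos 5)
Your proof is correct, and it takes a different route from the paper's. The paper first reduces to comparing $Q(A,X)$ with $Q(A,\bar X)$ and then establishes $Q(A,\bar X)\subseteq Q(A,X)$ by induction on the length of the spanning words $x_1(x_2(\dots(x_{k-1}x_k)\dots))$ with $x_i\in\bar X$, using the $G(X)$-invariance of $Q(A,X)$ at each inductive step to rewrite $a^g w'$ as $(a w'^{g^{-1}})^g$. You instead avoid the induction entirely by appealing to the universal property of the generated quasi-ideal: you check that $Q(A,X)$ contains $Y$ (via $G(X)$-invariance, $\bar X=X^{G(X)}\subseteq Q(A,X)$ and $Y\subseteq\bar Y=\bar X$) and is a quasi-ideal with respect to $Y$ (by part 2 of Proposition \ref{quasiwelldefined}), so minimality of $Q(A,Y)$ gives $Q(A,Y)\subseteq Q(A,X)$, and symmetry finishes. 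Both arguments rest on the same two facts from Proposition \ref{quasiwelldefined}; yours packages them more abstractly and is shorter, and it also compares $X$ with $Y$ directly rather than passing through the closure. The paper's inductive argument has the minor virtue of being explicit about what the body looks like as a span, which is reused elsewhere (e.g.\ in the remark that $3$-closed algebras are full-bodied), but as a proof of stability your version is complete and arguably cleaner. The only implicit point is that the intersection of quasi-ideals (with respect to a fixed set of axes) is again a quasi-ideal, so that ``the quasi-ideal generated by $Y$'' really is the smallest one containing $Y$; this is immediate and consistent with how the paper uses the term.
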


\begin{proof}
It suffices to show equality of $Q(A,X)$ and $Q(A,\bar{X})$. Clearly, 
$Q(A,X)\subseteq Q(A,\bar{X})$. On the other hand, by Proposition \ref{quasiwelldefined}, 
$Q(A,X)$ is invariant under $G(X)$, which means that $\bar X\subset Q(A,X)$. Hence 
$Q(A,\bar X)\subseteq Q(A,X)$, and hence we have equality.
\end{proof}

\subsection{Algebras with Seress fusion laws}

As we noted before, $0 \in \cF$.  However, if $A$ were to have sum decompositions, then this imposes further constraints on $\cF$.  If an axis $a$ lies in the summand $A_i$ then every $A_j$, $j\neq i$, is contained in the $0$-eigenspace of $\ad_a$, since $aA_j=0$.  In particular, as $A_j$ is a subalgebra, $0\in 0\star 0$.  In order to show our partial result, we will, in fact, require a lot more than this.

\begin{definition}
The fusion law $\cF$ is \emph{Seress} if $0\in\cF$ and for any $\lm\in\cF$ we have 
$0\star \lm \subseteq\{\lm\}$.
\end{definition}

Note that for $1$, we already have that $1\star\lm\subseteq\{\lm\}$.  So, for Seress fusion laws, it follows that 
$1\star 0\subseteq\{1\}\cap\{0\}=\emptyset$. Also note that $0\star 0\subseteq\{0\}$ 
implies that $A_0(a)$ is a subalgebra for every axis $a$.

\begin{lemma}[Seress Lemma]\textup{\cite[Proposition 3.9]{hrs}}
If $\cF$ is Seress, then every axis $a$ associates with $A_1(a)+A_0(a)$. That is, for 
$x\in A$ and $y \in A_1(a)+A_0(a)$, we have that
\[
a(xy)=(ax)y.
\]
In other words, $\ad_a$ and $\ad_y$ commute.
\end{lemma}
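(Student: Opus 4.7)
The plan is to decompose $x$ and $y$ into eigenvector components for $\ad_a$ and then use the Seress conditions on the fusion law to show that multiplication by $y$ preserves each eigenspace for $\ad_a$; once this is established, both $a(xy)$ and $(ax)y$ can be computed as the same explicit sum, giving equality.

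First I would write $x = \sum_{\mu \in \cF} x_\mu$ with $x_\mu \in A_\mu(a)$, using (A2), and $y = y_1 + y_0$ with $y_1 \in A_1(a)$ and $y_0 \in A_0(a)$. The key observation is that for every $\mu \in \cF$ we have $\mu \star 1 \subseteq \{\mu\}$ (already noted in the text for the eigenvalue $1$, via symmetry of $\star$) and $\mu \star 0 \subseteq \{\mu\}$ (this is precisely the Seress hypothesis, together with symmetry). Combining these with the fusion rule (A3) gives
\[
x_\mu y_1 \in A_\mu(a) \cdot A_1(a) \subseteq A_{\mu\star 1}(a) \subseteq A_\mu(a), \qquad x_\mu y_0 \in A_\mu(a) \cdot A_0(a) \subseteq A_{\mu\star 0}(a) \subseteq A_\mu(a),
\]
so $x_\mu y = x_\mu y_1 + x_\mu y_0 \in A_\mu(a)$ for every $\mu$.

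Now I would simply expand both sides. On the one hand, by linearity and the fact that $x_\mu y \in A_\mu(a)$,
\[
a(xy) = a\Bigl(\sum_{\mu \in \cF} x_\mu y\Bigr) = \sum_{\mu \in \cF} \mu\,(x_\mu y).
\]
On the other hand,
\[
(ax)y = \Bigl(\sum_{\mu \in \cF} \mu x_\mu\Bigr) y = \sum_{\mu \in \cF} \mu\,(x_\mu y).
\]
Comparing the two displays gives $a(xy) = (ax)y$, which is the desired identity; the statement that $\ad_a$ and $\ad_y$ commute is the same identity read as $\ad_a \ad_y(x) = \ad_y \ad_a(x)$ for arbitrary $x \in A$.

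I do not anticipate a serious obstacle here: the entire argument is really a bookkeeping exercise once the two fusion inclusions $\mu\star 1\subseteq\{\mu\}$ and $\mu\star 0\subseteq\{\mu\}$ are isolated. The only subtlety worth stating carefully is the symmetry of $\star$ (so that $0\star\mu\subseteq\{\mu\}$ transfers to $\mu\star 0\subseteq\{\mu\}$); apart from that, the proof is a one-line manipulation of the eigenspace decomposition.
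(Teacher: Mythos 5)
Your proof is correct and follows essentially the same route as the paper's: both decompose $x$ into $\ad_a$-eigencomponents, use $1\star\lambda\subseteq\{\lambda\}$ together with the Seress condition $0\star\lambda\subseteq\{\lambda\}$ to see that multiplication by $y$ preserves each eigenspace, and then compare $a(xy)$ with $(ax)y$ termwise. The paper merely phrases the same computation by reducing via linearity to a single eigenvector $x\in A_\lambda(a)$ and a single $y$ in $A_1(a)$ or $A_0(a)$, whereas you carry the full sums; the content is identical.
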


\begin{proof}
Since the associativity identity is linear in $y$, we may consider $y \in A_1$ and $y \in A_0$ separately.  Associativity is also linear in $x$, so, since we may decompose $x$ with respect to $A = \bigoplus_{\lambda \in \mathcal{F}} A_\lambda(a)$, it suffices to check for $x \in A_\lambda$.  As $\mathcal{F}$ is Seress, $1 \star \lambda, 0 \star \lambda \subseteq \{ \lambda \}$ and so $xy \in A_\lambda$ for $y \in A_1$, or $y \in A_0$.  Hence,
\[
a(xy) = \lambda xy = (\lambda x)y = (ax)y. \qedhere
\]
\end{proof}

Suppose $A$ is generated by the set of axes $X=Y_1\cup Y_2$, where for all 
$a\in Y_1$ and $b\in Y_2$ we have $ab=0$. (We write $Y_1Y_2=0$.) Let $A_i=\la\la Y_i \ra\ra$.

\begin{theorem}\label{splitoff}
Let $A$ be an axial algebra with $X=Y_1\cup Y_2$ satisfying $Y_1Y_2=0$. If the fusion law is 
Seress, then $Q(A_1,X_1)$ annihilates $A_2$.
\end{theorem}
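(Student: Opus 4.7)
\medskip
\noindent\textbf{Proof plan.} The plan is to argue by induction on the length of the right-normed spanning words $x_1(x_2(\cdots(x_{k-1}x_k)\cdots))$, $x_i\in Y_1$, of the body $Q(A_1,Y_1)$, using the Seress Lemma to slide the leftmost axis across a multiplication by an element of $A_2$.

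\medskip
\noindent\textbf{Step 1: $A_2$ lies in $A_0(a)$ for every $a\in Y_1$.} Fix $a\in Y_1$. By hypothesis $ab=0$ for every $b\in Y_2$, so $Y_2\subseteq A_0(a)$. Since $\cF$ is Seress, $0\star 0\subseteq\{0\}$, so $A_0(a)$ is a subalgebra of $A$. As $A_2=\la\la Y_2\ra\ra$ is the subalgebra generated by $Y_2$, it follows that $A_2\subseteq A_0(a)$. This is the \emph{only} place where Seress is used beyond the Seress Lemma itself, and it immediately handles the base case: for $a\in Y_1$ and $b\in A_2$, $ab\in aA_0(a)=0$.

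\medskip
\noindent\textbf{Step 2: Induction on word length.} Since $Q(A_1,Y_1)$ is by definition the smallest subspace of $A_1$ containing $Y_1$ and closed under left multiplication by every element of $Y_1$, it is spanned by products of the form $w=x_1(x_2(\cdots(x_{k-1}x_k)\cdots))$ with $x_i\in Y_1$. I claim that $wb=0$ for every such $w$ and every $b\in A_2$, by induction on $k$. The case $k=1$ is Step~1. For $k\geq 2$, write $w=x_1 w'$ where $w'=x_2(\cdots(x_{k-1}x_k)\cdots)$ is itself one of the spanning words, of length $k-1$. By the inductive hypothesis $w'b=0$. Now apply the Seress Lemma with $a=x_1$, $x=w'$ and $y=b$: since $b\in A_2\subseteq A_0(x_1)\subseteq A_1(x_1)+A_0(x_1)$ by Step~1, we obtain
\[
wb=(x_1 w')b = x_1(w'b) = x_1\cdot 0 = 0.
\]
Linearity in $b\in A_2$ and linearity in $w$ across the spanning set then give $Q(A_1,Y_1)\cdot A_2=0$, as required.

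\medskip
\noindent\textbf{Main obstacle.} There is no serious computational obstacle; the proof is essentially a one-line application of the Seress Lemma once Step~1 is in place. The one conceptual point that must be isolated is that the body $Q(A_1,Y_1)$ really is spanned by \emph{right-normed} products, because this is precisely the shape that allows the Seress Lemma to peel off the leftmost axis. Equivalently, the argument relies on $A_0(a)$ being closed under multiplication (so that $A_2$ sits in a single eigenspace of $\ad_a$), which is exactly the Seress condition $0\star 0\subseteq\{0\}$; if the fusion law were only required to satisfy $0\in\cF$, Step~1 would fail and the induction would break down at the base case. A minor notational point: the statement writes $Q(A_1,X_1)$ whereas the setup uses $Y_1,Y_2$; I read this as $Q(A_1,Y_1)$, which is the only interpretation consistent with $A_1=\la\la Y_1\ra\ra$.
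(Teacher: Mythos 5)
Your proof is correct and is essentially the paper's argument: the paper packages your induction as the statement that $T=\Ann(A_2)\cap A_1$ is a quasi-ideal containing $Y_1$ (hence containing the body), with the same two uses of the Seress condition — $A_0(a)$ being a subalgebra to get $A_2\subseteq A_0(a)$, and the Seress Lemma to peel off the leftmost axis. Your explicit induction on right-normed words is just the unfolded version of that closure argument.
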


\begin{proof}
First of all, note that, for $x\in Y_1$, since $A_0(x)$ is a subalgebra and $Y_2\subseteq A_0(x)$, 
we have that $xA_2=0$. This means that $Y_1\subseteq U$, where $U:=\{u\in A_1 :  
uA_2=0\}=\Ann(A_2)\cap A_1$ is the annihilator of $A_2$ in $A_1$. Now, for $u\in U$ and $v\in A_2$, 
by Seress's Lemma, $(xu)v = x(uv) = x0 = 0$.  So, $xu\in U$. Since this is true for all $x\in Y_1$, 
$U$ is a quasi-ideal. Therefore, $U$ is a quasi-ideal containing $Y_1$, implying that 
$Q(A_1,Y_1)\subseteq U$.
\end{proof}

\begin{corollary} \label{induction}
Suppose that $A$ is an axial algebra such that $X=Y_1\cup Y_2$ is a disjoint union of axes and the 
fusion law is Seress.  If $A_1$ is slender, then $A = A_1 + A_2$.
\end{corollary}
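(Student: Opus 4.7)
The proof reduces immediately to showing that $A_1 A_2 = 0$, since once this vanishing of cross products is established, $A_1 + A_2$ becomes a subalgebra (as $A_i A_i \subseteq A_i$ and the mixed products vanish) containing the generating set $X = Y_1 \cup Y_2$, hence equal to $A$.

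To obtain $A_1 A_2 = 0$, the plan is to invoke Theorem \ref{splitoff} directly. That theorem tells us that the body $Q(A_1, Y_1)$ annihilates $A_2$. Since we are assuming $A_1$ is full-bodied, i.e., $Q(A_1, Y_1) = A_1$, this gives exactly $A_1 A_2 = 0$, as desired.

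The only remaining task is to verify that $A_1 + A_2$ exhausts $A$. A general element $u_1 + u_2$, $v_1 + v_2$ with $u_i, v_i \in A_i$ multiplies as
\[
(u_1 + u_2)(v_1 + v_2) = u_1 v_1 + u_2 v_2,
\]
the cross terms vanishing by the previous step. Thus $A_1 + A_2$ is closed under multiplication, and since it contains $Y_1 \cup Y_2 = X$, it contains $\langle\langle X \rangle\rangle = A$. As both summands lie in $A$, the reverse inclusion is trivial and $A = A_1 + A_2$ follows.

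This is essentially a one-line deduction from Theorem \ref{splitoff}; the real content sits upstream in that theorem and in the Seress Lemma used to prove it. No obstacle is expected here beyond keeping the roles of $Y_i$ and $A_i = \langle\langle Y_i \rangle\rangle$ straight.
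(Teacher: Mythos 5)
Your proof is correct and is exactly the argument the paper intends (the corollary is left without an explicit proof precisely because it is this one-line deduction from Theorem \ref{splitoff}): full-bodiedness gives $Q(A_1,Y_1)=A_1$, so $A_1A_2=0$, and then $A_1+A_2$ is a subalgebra containing $X$, hence all of $A$. No issues.
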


We can now state our partial result for the conjecture about the non-annihilating graph.

\begin{theorem}\label{bodysum}
Let $A$ be an axial algebra with a Seress fusion law and let $X_i$ be the components of $\Delta(X)$ with $A_i = \la \la X_i \ra \ra$.  If all but possibly one $A_i$ are slender, then $A = \sum_{i \in I} A_i$.
\end{theorem}

\begin{proof}
This follows from Corollary \ref{induction} using induction on $|I|$.
\end{proof}

Recall that a $3$-closed axial algebra is slender, so the above theorem holds when all but at most one $A_i$ are $3$-closed.

As noted above, an axial algebra $A$ of Jordan type $\eta$ is $1$-closed and Seress.  So, by Corollary \ref{bodysum}, $A = \sum_{i \in I} A_i$, where $X_i$ are the connected components of $\Delta(A)$.  This is part (2) of Theorem A in \cite{HSS}.

The Ising fusion law $\cM(\alpha, \beta)$, of which the Monster fusion law is a special case, is also Seress.  Most of the examples we know for $\cM(\frac{1}{4}, \frac{1}{32})$ are $2$-closed, while a few are $1$- or $3$-closed \cite[Table 4]{algorithm}.  So we should expect the above decomposition theorem to apply to a wide class of examples.

However, there exist examples of axial algebras with fusion law $\cM(\frac{1}{4}, \frac{1}{32})$ 
that are not $3$-closed.  In \cite{algorithm}, we found an $18$-dimensional primitive axial algebra 
with Miyamoto group $S_3 \times S_3$, which is $4$-closed, but not $3$-closed.  In fact, in this 
example, $17=\dim(Q(A, X))<\dim(A)=18$.  So this algebra is not slender.

\end{document}